\newcommand{ \R } { \mathbb{R} }
\newcommand{ \N } { \mathbb{N} }
\newcommand{\Q} {\mathbb{Q}}
\newcommand{\w}{\omega}
\newcommand{\wstar}{\omega^*}
\newcommand{\Cstar}{C^*}
\newcommand{\cont}{\mathfrak{c}}
\newcommand{\script}{\mathcal}
\newcommand{\parentheses}[1]{{\left( {#1} \right)}}
\newcommand{\p}{\parentheses}
\newcommand{\of}{\parentheses}
\newcommand{\singletonDeletion}[1]{\script{D}\parentheses{#1}}
\newcommand{\closure}[1]{\overline{#1}}
\newcommand{\interior}[1]{\mathrm{int}\of{#1}}
\newcommand{\Set}[1]{{\left\lbrace {#1} \right\rbrace}}
\newcommand{\singleton}{\Set}
\newcommand{\union}{\cup}
\newcommand{\Union}{\bigcup}
\newcommand{\cardinality}[1]{{\left\lvert {#1} \right\rvert}}
\def\set#1:#2{\Set{{#1} \colon {#2}}}
\begin{document}
\title{The space $\wstar$ and the reconstruction of normality}
\author{Max F.\ Pitz}
\address{Mathematical Institute\\University of Oxford\\Oxford OX2 6GG\\United Kingdom}
\email{pitz@maths.ox.ac.uk}
\subjclass[2010]{Primary	54D35, 54D15; Secondary 54D40, 54C45}
\keywords{Reconstruction, $\omega^*$, normality, paracompactness, $F$-space, Stone-\v{C}ech remainder}

\begin{abstract}
The topological reconstruction problem asks how much information about a topological space can be recovered from its point-complement subspaces. If the whole space can be recovered in this way, it is called reconstructible. 

Our main result states that it is independent of the axioms of set theory (ZFC) whether the Stone-\v{C}ech remainder of the integers $\wstar$ is reconstructible. Our second result is about the reconstruction of normality. We show that assuming the Continuum Hypothesis, the compact Hausdorff space $\wstar$ has a non-normal reconstruction, namely the space $\wstar \setminus \singleton{p}$ for a $P$-point $p$ of $\wstar$. More generally, we show that the existence of an uncountable cardinal $\kappa$ satisfying $\kappa = \kappa^{<\kappa}$ implies that there is a normal space with a non-normal reconstruction. 

These results demonstrate that consistently, the property of being a normal space is not reconstructible. Whether normality is non-reconstructible in ZFC remains an open question.
\end{abstract}

\maketitle
\thispagestyle{plain}

\newtheorem{myclm}{Claim}
\newtheorem*{recproblem}{The Topological Reconstruction Problem}
\newtheorem*{myclmn}{Claim}
\newtheorem{mycase}{Case}
\newtheorem{mythm}{Theorem}\numberwithin{mythm}{section} 
\newtheorem{myprop}[mythm]{Proposition}
\newtheorem{mycor}[mythm]{Corollary}
\newtheorem{mylem}[mythm]{Lemma} 
\newtheorem{myquest}{Question}

\section{The topological reconstruction problem}

In 1941, Ulam and Kelly proposed the \emph{reconstruction conjecture} in graph theory. This conjecture asks whether every finite graph with at least three vertices is uniquely determined by its unlabelled subgraphs obtained by deleting a single vertex and all incident edges. To this day, the reconstruction conjecture has remained open, and is considered one of the most challenging problems in graph theory. For a survey about the reconstruction conjecture see for example \cite{Bondy}.

The present paper is concerned with the topological version of the reconstruction conjecture, introduced in \cite{recpaper}. A topological space $Y$ is called a \emph{card} of another space $X$ if $Y$ is homeomorphic to $X \setminus \singleton{x}$ for some $x$ in $X$. The \emph{deck} $\singletonDeletion{X}$ of a space $X$ is a transversal for the non-homeomorphic cards of $X$, i.e.\ a set recording the topologically distinct subspaces one can obtain by deleting singletons from $X$. For example, if $Y$ is a card of the real line, then $Y$ is homeomorphic to two copies of the real line. We write this as $\singletonDeletion{\R}=\Set{\R \oplus \R}$. For $n$-dimensional Euclidean spaces we have $\singletonDeletion{\R^n}=\Set{\R^n \setminus \singleton{0}}$. For the unit interval $I$ we have $\singletonDeletion{I}=\Set{[0,1), [0,1) \oplus [0,1)}$. The Cantor set has the deck $\singletonDeletion{C}=\singleton{C \setminus \singleton{0}}$ and the rationals $\Q$ and irrationals $P$ have decks $\singletonDeletion{\Q} = \Set{\Q}$ and $\singletonDeletion{P}=\Set{P}$.
 
Two topological spaces are said to be \emph{reconstructions} of each other if their decks are indistinguishable. A space $X$ is said to be \emph{reconstructible} if all its reconstructions are homeomorphic to $X$. In the same spirit, we say that a property of topological spaces is reconstructible if it is preserved under reconstruction. Formally, a space $X$ is reconstructible if $\singletonDeletion{X}=\singletonDeletion{Z}$ implies $X \cong Z$, and a property $\script{P}$ of topological spaces is reconstructible if $\singletonDeletion{X}=\singletonDeletion{Z}$ implies ``$X$ has $\script{P}$ if and only if $Z$ has $\script{P}$".

It is shown in \cite{recpaper} that all the aforementioned examples are reconstructible, with the exception of the Cantor set where we have $\singletonDeletion{C}=\singletonDeletion{C\setminus \singleton{0}}$. This example also shows that compactness is a non-reconstructible property. These observations give rise to the question what underlying principles make a topological space or a topological property reconstructible.

\begin{recproblem}
Determine which topological spaces and properties are reconstructible and which ones fail to be reconstructible.
\end{recproblem}

The purpose of this paper is to describe two surprising results about topological reconstruction. Our first result shows that it is undecidable in ZFC whether $\wstar$, the Stone-\v{C}ech remainder of the integers, is reconstructible. Our second result is concerned with the question whether \emph{normality}, one of the fundamental topological separation properties, is reconstructible. We show that $\wstar$ provides a natural example establishing that the answer is consistently negative. 

The result about reconstruction of normality is somewhat curious, especially so because all other topological separation axioms are easily seen to be reconstructible for spaces containing at least three points \cite[Thm.~3.1]{recpaper}. In particular, the property of being a $T_1$, a Hausdorff or a Tychonoff space is reconstructible. But for normality, the best we could do in \cite{recpaper} was to show that normality of a space is reconstructible provided the space has at least one normal card. On the positive side, this result applies to every normal space containing a $G_\delta$-point, since normality is hereditary with respect to $F_\sigma$ subspaces \cite[2.1.E]{Eng}. In particular, normality is reconstructible in the realm of first-countable spaces. However, for an uncountable cardinal $\kappa$, the Cantor cube $2^\kappa$ provides an example of a compact Hausdorff space where all cards are non-normal. And given that the Cantor set $C \cong 2^\w$ is non-reconstructible, one might suspect that these large Cantor cubes are examples witnessing that normality is non-reconstructible. However, $2^\kappa$ is reconstructible if $\kappa$ is uncountable \cite[2.1]{recpaper}.

This is where the Stone-\v{C}ech remainder of the integers $\wstar$ enters the stage. It is a well-known open problem whether all cards of $\wstar$ are non-normal (cf.\ \cite[Q13]{openproblems} and \cite{szymanski}). Under the Continuum Hypothesis (CH), however, it is a classical theorem independently due to Rajagopalan \cite{raj} and Warren \cite{Warren} that the answer is yes. Thus, under CH, the space $\wstar$ is a further candidate to witness that normality might be non-reconstructible. 

And indeed, we show below that $\wstar$ provides a consistent example that normality is non-reconstructible. In our main result we prove that assuming CH, the space $\wstar \setminus \singleton{p}$ for a $P$-point $p$ of $\wstar$ (under CH, this is a point $p$ with a nested neighbourhood base) is a non-normal reconstruction of the normal space $\wstar$. Thus, CH implies that normality is non-reconstructible. We also show that CH (which is equivalent to $\w_1=\w_1^{<\w_1}$) can be weakened to the assumption that there is an uncountable cardinal $\kappa$ with $\kappa = \kappa^{<\kappa}$. We will see that under this cardinal assumption, the spaces $S_\kappa$ defined by Negrepontis in \cite{negrepontis} provide further examples that normality is non-reconstructible. This ties in well with the previous non-reconstruction results, as the spaces $S_\kappa$ can be thought of simultaneously generalising the behaviour of the Cantor set $C$ and of $\wstar$ under CH. 

This paper is organised as follows. In Section \ref{section3}, we recall the relevant definitions and facts about the spaces $\wstar$ and $S_\kappa$. In Section \ref{sectiondeck} we briefly describe the deck of $\wstar$, and then turn to a closer investigation of cards of $\wstar$ and $S_\kappa$ in Section \ref{sectionfincomp}. Our main technical result states that under CH, all finite compactifications of $\wstar \setminus \singleton{x}$ are homeomorphic to $\wstar$ and, moreover, that all but at most one point added at infinity will be $P$-points. 

In Section \ref{sectionnormality}, we recall why cards of $\wstar$ fail to be normal under CH. We then prove that for uncountable $\kappa$ with $\kappa = \kappa^{<\kappa}$, all cards of $S_\kappa$ are non-normal. In Section \ref{section4}, we prove the announced reconstruction results. We show that under CH, the space $\wstar$ is non-reconstructible, and more generally that under $\kappa = \kappa^{<\kappa}$, the space $S_\kappa$ is non-reconstructible. We also show that it follows from a theorem by van Douwen, Kunen and van Mill that it is consistent with MA$+\neg$CH that $\wstar$ is reconstructible. Finally, in Section \ref{sectionalltogether} we combine our results from the earlier sections to show that the existence of an uncountable cardinal $\kappa$ with $\kappa = \kappa^{<\kappa}$ implies that normality is non-reconstructible. We conclude the paper in Section \ref{sectionquestions} with some questions.

I would like to thank my advisor Rolf Suabedissen for inspiring discussions about the topological reconstruction problem.

\section{\texorpdfstring{What we need to know about $\wstar$ and $S_\kappa$}{Introduction to the Stone-Cech compactification of the integers}}
\label{section3}

In this section we recall crucial properties of the spaces $\wstar$ and $S_\kappa$ which we use in the course of this paper. All this and more can be found in \cite{Ultrafilters,Rings, Intro}. 

A subset $Y$ of a space $X$ is $\Cstar$-\emph{embedded} if every bounded real-valued continuous function on $Y$ can be extended to a continuous function on all of $X$. For a Tychonoff space $X$, we write $\beta X$ for its \emph{Stone-\v{C}ech compactification}, the unique compact Hausdorff space in which $X$ is densely $\Cstar$-embedded, and we write $X^*=\beta X \setminus X$ for the \emph{remainder} of $X$. A space is \emph{zero-dimensional} if it has a base of clopen (closed-and-open) sets. 

A subset of a Tychonoff space of the form $f^{-1}(0)$ for some real-valued continuous function $f$ is called a \emph{zero-set}. A \emph{cozero-set} is the complement of a zero-set. A Tychonoff space $X$ is an $F$-\emph{space} if each cozero-set is $\Cstar$-embedded in $X$. A $G_\delta$ subset of $X$ is a subset which can be expressed as a countable intersection of open sets, and an $F_\sigma$ subset is a complement of a $G_\delta$, namely a set which can be expressed as a countable union of closed subsets. We shall need the following facts about $F$-spaces \cite[Ch.\,14]{Rings}.

\begin{enumerate}
\item A normal space is an $F$-space if and only if disjoint open $F_\sigma$-sets have disjoint closures.
\item Closed subspaces of normal $F$-spaces are again $F$-spaces.
\item Infinite closed subspaces of compact $F$-spaces contain a copy of $\beta \w$. Therefore, compact $F$-spaces do not contain convergent sequences.
\end{enumerate}

The space $\wstar$ is a compact zero-dimensional Hausdorff space of weight $\cont=2^{\aleph_0}$ without isolated points with the following extra properties: it is an $F$-space in which each non-empty $G_\delta$-set has non-empty interior. A space with these properties is also called \emph{Parovi\v{c}enko space}. 

\begin{mythm}[Parovi\v{c}enko {\cite{parov}}, van Douwen and van Mill {\cite{douwenmill}}]
\label{parovtheorem}
CH is equivalent to the assertion that every Parovi\v{c}enko space is homeomorphic to $\wstar$.
\end{mythm}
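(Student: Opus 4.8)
Both directions are classical, though of very different character; I sketch each. For ``CH $\Rightarrow$ uniqueness'' — Parovi\v{c}enko's half — the plan is to pass to Boolean algebras by Stone duality. A Parovi\v{c}enko space $X$ is compact, zero-dimensional and Hausdorff, hence is the Stone space of its clopen algebra $\clopens{X}$, and $\wstar$ is the Stone space of $\clopens{\wstar}\cong\powerSet{\w}/\mathrm{fin}$. Translating the defining properties dually, $\clopens{X}$ is what one may call a \emph{Parovi\v{c}enko algebra}: it is atomless (no isolated points); it satisfies the $F$-space interpolation property, i.e.\ whenever $a_0\le a_1\le\cdots$ and $b_0\ge b_1\ge\cdots$ with $a_n\le b_m$ for all $n,m$, there is a $c$ with $a_n\le c\le b_m$ for all $n,m$; and every decreasing $\w$-sequence of non-zero elements has a non-zero lower bound (dual to the requirement that non-empty $G_\delta$-sets have non-empty interior). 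It has cardinality $\cont$, and one checks that $\powerSet{\w}/\mathrm{fin}$ is itself such an algebra, also of cardinality $\cont$. So it suffices to show that any two Parovi\v{c}enko algebras of cardinality $\aleph_1$ are isomorphic, and I would prove this by a back-and-forth construction of length $\w_1$: enumerate both algebras in order type $\w_1$, and build an increasing chain of partial isomorphisms between \emph{countable} subalgebras, continuous at limit stages, absorbing at each successor stage the next listed element first of one algebra and then of the other. This is where CH is used: $\cont$ must equal $\aleph_1$ both for the enumerations to have length $\w_1$ and because the interpolation property is only countable, so the intermediate subalgebras must stay countable and the recursion cannot run past $\w_1$.

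The heart of this half is the one-step extension lemma. Given countable subalgebras $A\le\clopens{X}$ and $B\le\powerSet{\w}/\mathrm{fin}$, an isomorphism $\varphi\colon A\to B$, and $a\in\clopens{X}\setminus A$, one must produce $b\in\powerSet{\w}/\mathrm{fin}$ realising over $B$ the same quantifier-free type $a$ realises over $A$; explicitly, $\varphi(c)\le b$ iff $c\le a$, and $b\wedge\varphi(c)=0$ iff $a\wedge c=0$, for every $c\in A$. Since $A$ is countable, $\Set{\varphi(c) : c\in A,\ c\le a}$ lists as an increasing $\w$-sequence and $\Set{\varphi(c) : c\in A,\ a\le c}$ as a decreasing one, so the interpolation property yields a $b$ squeezed between them; atomlessness together with the ``$G_\delta$-interior'' property then supply enough room to perturb $b$ so that the remaining strict requirements ($\varphi(c)\not\le b$ when $c\not\le a$, and $b\wedge\varphi(c)\ne0$ when $a\wedge c\ne0$) also hold. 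All hypotheses being symmetric in the two algebras, the same lemma drives the ``back'' steps, and the construction delivers $\clopens{X}\cong\powerSet{\w}/\mathrm{fin}$, hence $X\cong\wstar$. Inside this half, checking that the needed room is always available is the one point requiring care.

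The converse, ``uniqueness $\Rightarrow$ CH'', is the genuinely deep direction, due to van Douwen and van Mill, and it is where I expect the real obstacle to lie. I would argue contrapositively: assuming $\cont>\aleph_1$, construct a Parovi\v{c}enko space not homeomorphic to $\wstar$. Concretely one builds a Parovi\v{c}enko algebra of cardinality $\cont$ by transfinite recursion of length $\cont$, interleaving two kinds of step — diagonalisation steps that secure atomlessness, the $F$-space property and the $G_\delta$-interior property so that the union is again a Parovi\v{c}enko algebra, and coding steps that plant in the algebra a structural feature impossible inside $\powerSet{\w}/\mathrm{fin}$ once $\cont>\aleph_1$ (roughly, a subalgebra of cardinality $\aleph_1$ that sits ``saturatedly'' inside the whole algebra in a way that, were it to occur inside $\powerSet{\w}/\mathrm{fin}$, would force CH). The crux is to isolate such a ZFC-provable invariant and to verify that the two families of requirements can be met together through all $\cont$ stages and survive the limits. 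This is precisely the content of \cite{douwenmill}, and I would invoke that paper rather than reprove it.
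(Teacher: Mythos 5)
The paper does not prove this theorem at all --- it is quoted as a known result with citations to Parovi\v{c}enko and to van Douwen--van Mill --- so your sketch can only be measured against the standard published arguments, and against those it holds up. Your treatment of the CH direction is exactly the classical Parovi\v{c}enko proof: pass by Stone duality to the clopen algebra, translate the four topological conditions into atomlessness, the countable interpolation (weak $(\omega,\omega)$-separation) property dual to being an $F$-space, and the absence of countable decreasing sequences of nonzero elements with no nonzero lower bound, and then run a back-and-forth of length $\w_1$ through countable subalgebras; your identification of the one-step extension lemma as the point needing care is also where the real work sits in, e.g., van Mill's account. The converse you correctly defer to the cited paper, which is what this paper does too; I would only caution that your guessed description of the van Douwen--van Mill method (a length-$\cont$ recursion planting a coded invariant) is speculative and does not match their actual, much shorter argument, which exhibits concrete Parovi\v{c}enko spaces and a specific ZFC-provable property of $\wstar$ distinguishing them under $\neg$CH --- but since you invoke the reference rather than reprove it, this costs you nothing.
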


A $P$\emph{-point} is a point $p$ such that any countable intersection of neighbourhoods of $p$ contains again a neighbourhood of $p$. In other words, $p$ is a $P$-point if $p$ is in the interior of every $G_\delta$ containing $p$. The existence of $P$-points in $\wstar$ was first proved as a consequence of the Continuum Hypothesis in \cite{Rudin}. The existence of $P$-points can also be shown under $MA+\neg CH$ \cite[2.5.5]{Intro}. On the other hand, it is consistent that $P$-points in $\wstar$ do not exist \cite[2.7]{Intro}.  We list some facts about $P$-points in $\wstar$ under CH.
\begin{enumerate}
\setcounter{enumi}{3}
\item Assuming [CH], a point $p \in \wstar$ is a $P$-point if and only if $p$ has a nested neighbourhood base if and only if $p$ is not contained in the boundary of any open $F_\sigma$-set.
\item Assuming [CH], for every pair of $P$-points in $\wstar$ there exists an autohomeomorphism of $\wstar$ mapping one $P$-point to the other \cite{Rudin}.
\end{enumerate}

We now describe the spaces $S_\kappa$, which generalise the behaviour of $\wstar$ under CH to higher cardinals $\kappa$. Note that in compact zero-dimensional spaces, cozero-sets are countable unions of clopen sets. This motivates the following definition from \cite[Ch.\,14]{Ultrafilters}. In a zero-dimensional space $X$, the ($X$-)\emph{type} of an open subset $U$ of $X$ is the least cardinal number $\tau=\tau(U)$ such that $U$ can be written as a union of $\tau$-many clopen subsets of $X$. A zero-dimensional space where open subsets of type less than $\kappa$ are $\Cstar$-embedded is called an $F_\kappa$-\emph{space}. In zero-dimensional compact spaces the notions of $F$- and $F_{\w_1}$-space coincide.

\begin{enumerate}
\item[($1'$)] A normal space is an $F_\kappa$-space if and only if disjoint open sets of types less than $\kappa$ have disjoint closures \cite[6.5]{Ultrafilters}.
\end{enumerate}

Following \cite[1.2]{Dow}, we call a space a $\kappa$-\emph{Parovi\v{c}enko space} if it is a compact zero-dimensional $F_\kappa$-space of weight $\kappa^{<\kappa}$ without isolated points such that non-empty intersections of fewer than $\kappa$ many open sets have non-empty interior. The space $\wstar$ is an ($\w_1$-)Parovi\v{c}enko space. 

\begin{mythm}[Negrepontis {\cite{negrepontis}}, Dow {\cite{Dow}}]
\label{negre}
The assumption $\kappa = \kappa^{<\kappa}$ is equivalent to the assertion that all $\kappa$-Parovi\v{c}enko spaces are homeomorphic.
\end{mythm}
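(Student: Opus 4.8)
The plan is to establish the two implications separately, along the lines of Theorem~\ref{parovtheorem}; indeed, for $\kappa = \w_1$ the present statement is exactly that one, since an $\w_1$-Parovi\v{c}enko space is precisely a Parovi\v{c}enko space, $\wstar$ is one, and $\w_1^{<\w_1} = \cont$, so that $\w_1 = \w_1^{<\w_1}$ is CH. I work throughout on the Boolean side via Stone duality: a compact zero-dimensional Hausdorff space $X$ is recovered from its Boolean algebra $B_X = \clopens{X}$ of clopen sets, and homeomorphisms correspond to Boolean isomorphisms. Unwinding the four defining clauses of a $\kappa$-Parovi\v{c}enko space for $B := B_X$ gives: (i) $\cardinality{B} = \kappa^{<\kappa}$; (ii) $B$ is atomless; (iii) the \emph{$\kappa$-interpolation property}: whenever $S, T \subseteq B$ have size $<\kappa$ and $s \wedge t = 0$ for all $s \in S$, $t \in T$, there is $b \in B$ with $s \le b$ for all $s \in S$ and $b \wedge t = 0$ for all $t \in T$ (this is clause ($1'$) read through the correspondence between open sets of type $<\kappa$ and $<\kappa$-indexed unions of clopen sets, using that in a compact zero-dimensional space two open sets have disjoint closures iff a single clopen set separates them); and (iv) the \emph{$\kappa$-completeness property}: every subset of $B \setminus \Set{0}$ of size $<\kappa$ with the finite intersection property has a lower bound in $B \setminus \Set{0}$ (this translates ``non-empty intersections of fewer than $\kappa$ many open sets have non-empty interior''). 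I also record that $\kappa = \kappa^{<\kappa}$ forces $\kappa$ to be regular: if $\mu := \mathrm{cf}(\kappa) < \kappa$, then $\kappa < \kappa^{\mu} \le \kappa^{<\kappa} = \kappa$ by K\"onig's theorem, a contradiction. Hence under $\kappa = \kappa^{<\kappa}$ one has $\cardinality{B} = \kappa$ with $\kappa$ regular (and such $B$ do exist, e.g.\ as the clopen algebra of Negrepontis's space $S_\kappa$ from \cite{negrepontis}).

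For the implication ``$\kappa = \kappa^{<\kappa}$ implies that all $\kappa$-Parovi\v{c}enko spaces are homeomorphic'', let $X, Y$ be two such spaces and put $A := \clopens{X}$, $B := \clopens{Y}$; both are atomless Boolean algebras of size $\kappa$ enjoying properties (iii) and (iv), and it suffices to produce an isomorphism $A \cong B$, which induces a homeomorphism $X \cong Y$ by Stone duality. Enumerate $A = \Set{a_\alpha \colon \alpha < \kappa}$ and $B = \Set{b_\alpha \colon \alpha < \kappa}$ and build, by recursion on $\alpha < \kappa$, an increasing continuous chain of isomorphisms $f_\alpha \colon A_\alpha \to B_\alpha$ between subalgebras $A_\alpha \le A$ and $B_\alpha \le B$ of size $<\kappa$, arranging $a_\alpha \in A_{\alpha+1}$ and $b_\alpha \in B_{\alpha+1}$. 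At limit stages take unions: since $\kappa$ is regular and each $\cardinality{A_\beta} < \kappa$, the union again has size $<\kappa$, and a union of isomorphisms is an isomorphism. Then $f := \bigcup_{\alpha < \kappa} f_\alpha$ is an isomorphism $A \to B$.

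Everything hinges on a single ``forth'' step (the ``back'' step is symmetric): given an isomorphism $g \colon A_0 \to B_0$ between subalgebras of size $<\kappa$ and $a \in A \setminus A_0$, one must find $b \in B$ such that $g$ extends to an isomorphism $\langle A_0 \cup \Set{a} \rangle \to \langle B_0 \cup \Set{b} \rangle$ sending $a$ to $b$ (any such $b$ automatically lies outside $B_0$, as $a \notin A_0$). Since every element of $\langle A_0 \cup \Set{a} \rangle$ has the form $(s \wedge a) \vee (t \wedge -a)$ with $s, t \in A_0$, such an extension exists exactly when $b$ realizes over $B_0$ the ``cut'' that $a$ realizes over $A_0$: writing $I = \Set{s \in A_0 \colon s \le a}$ and $J = \Set{s \in A_0 \colon s \wedge a = 0}$, one needs $b \in B$ lying above every element of $g[I]$, disjoint from every element of $g[J]$, and \emph{splitting} $g(s)$ --- i.e.\ with $g(s) \wedge b \ne 0 \ne g(s) \wedge -b$ --- for every $s \in A_0$ that $a$ splits. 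Such a $b$ is assembled by a recursion of length $<\kappa$ along an enumeration of $A_0$: the $\kappa$-interpolation property (iii) takes care of the ``above $g[I]$, disjoint from $g[J]$'' demands --- legitimate because $g[I]$ and $g[J]$ are mutually disjoint families of size $<\kappa$ --- while atomlessness together with the $\kappa$-completeness property (iv) provides, at each stage and in the limit, the nonzero clopen ``mass'' in which the prescribed splittings can be carried out. This is where the hypothesis $\kappa = \kappa^{<\kappa}$ is used: all the auxiliary families remain of size $<\kappa$, so that (iii) and (iv) apply, and the back-and-forth of length $\kappa$ exhausts $A$ and $B$ because their weight equals $\kappa$. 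For $\kappa = \w_1$ this is precisely the one-step extension in Parovi\v{c}enko's proof, with ``countable'' replaced by ``of size $<\kappa$'', and I expect this extension --- in particular the careful bookkeeping of the conditions through the length-$<\kappa$ recursion --- to be the main obstacle.

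For the converse one must exhibit two non-homeomorphic $\kappa$-Parovi\v{c}enko spaces whenever $\kappa^{<\kappa} > \kappa$. The point is that properties (iii) and (iv) only constrain configurations of size $<\kappa$, whereas the prescribed weight $\kappa^{<\kappa}$ is strictly larger; this slack makes it possible to build two $\kappa$-Parovi\v{c}enko Boolean algebras differing in a feature of cardinality $\ge \kappa$ (for instance the structure of a suitable gap, or the presence of a point of small character), so that their Stone spaces cannot be homeomorphic. For $\kappa = \w_1$ this is precisely the theorem of van Douwen and van Mill quoted in Theorem~\ref{parovtheorem}; the general case is carried out in \cite{negrepontis, Dow}, from which I would import the construction.
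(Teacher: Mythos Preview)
The paper does not prove this theorem: it is stated with attribution to Negrepontis \cite{negrepontis} and Dow \cite{Dow} and used as a black box, exactly as Theorem~\ref{parovtheorem} is. So there is no ``paper's own proof'' to compare against.

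That said, your outline is the standard argument one finds in those references. The Stone-duality translation of the four clauses into properties (i)--(iv) of the clopen algebra is correct, and the deduction that $\kappa$ is regular is right. The forward direction is exactly the back-and-forth of length $\kappa$: properties (iii) and (iv) together with atomlessness say precisely that the clopen algebra is an $\eta_\kappa$-set in the sense of Hausdorff (equivalently, $\kappa$-saturated as a Boolean algebra), and for two such algebras of size $\kappa$ with $\kappa$ regular the back-and-forth goes through. The one place where your sketch is thin is the inner ``recursion of length $<\kappa$'' you invoke to realise the splitting conditions in the forth step; in the cited literature this is usually avoided by first proving once and for all that (iii)+(iv)+atomless implies the $\eta_\kappa$ property (between any two $<\kappa$-sized sets $L<U$ with room there is a strict interpolant), after which the one-step extension is a single application of that lemma rather than a nested recursion. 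Your version is not wrong, just less clean.

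For the converse you correctly identify that the work lies in constructing two non-homeomorphic $\kappa$-Parovi\v{c}enko spaces when $\kappa^{<\kappa}>\kappa$, and you are right to defer to \cite{Dow} for this---that construction is the substantive content of Dow's paper and is not something one improvises. Note, incidentally, that the $\kappa=\w_1$ converse is due to van Douwen--van Mill \cite{douwenmill}, not \cite{negrepontis}.
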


If the condition $\kappa=\kappa^{<\kappa}$ is satisfied then the unique $\kappa$-Parovi\v{c}enko space exists and is denoted by $S_\kappa$ \cite[6.12]{Ultrafilters}. The space $S_\w$ is homeomorphic to the Cantor space, and whenever the space $S_{\w_1}$ exists, it is homeomorphic to $\wstar$. The existence of uncountable cardinals satisfying the equality $\kappa=\kappa^{<\kappa}$ is independent of ZFC but an assumption like $\kappa^+=2^\kappa$ implies the equality for $\kappa^+$. 

A $P_\kappa$\emph{-point} $p$ is a point such that the intersection of less than $\kappa$-many neighbourhoods of $p$ contains again an open neighbourhood of $p$. Thus, a  $P_{\w_1}$-point is a $P$-point. For a proof that $S_\kappa$ contains $P_\kappa$-points see \cite[6.17]{Ultrafilters} or \cite[4.6]{comppaper}. 
\begin{enumerate}
\item [($4'$)] Assume $\kappa=\kappa^{<\kappa}$. In $S_\kappa$, a point $p$ is a $P_\kappa$-point if and only if $p$ has a nested neighbourhood base if and only if $p$ is not contained in the boundary of any open set of type less than $\kappa$.  
\item [($5'$)] Assume $\kappa=\kappa^{<\kappa}$. For every pair of $P_\kappa$-points in $S_\kappa$ there exists an autohomeomorphism of $S_\kappa$ mapping one $P_\kappa$-point to the other \cite[6.21]{Ultrafilters}.
\end{enumerate}

\section{\texorpdfstring{The deck of $\wstar$}{The deck of w*}}
\label{sectiondeck}

We briefly describe the cards of $\wstar$. Recall that the cards of a space correspond to the non-homeomorphic subspaces one can obtain by deleting singletons. It turns out that cards of compact Hausdorff spaces, and in particular cards of $\wstar$ correspond to the different orbits under the action of its autohomeomorphism group. 

A space X is \emph{homogeneous} if for every pair of points $x$ and $y$ of X there exists a homeomorphism of $X$ carrying $x$ to $y$. In general, we say $x$ and $y$ lie in the same \emph{orbit} of $X$ if $x$ can be mapped to $y$ by a homeomorphism of $X$. The orbits form equivalence classes and the collection of orbits is denoted by $X/_\sim$. 

If $x$ and $y$ lie in the same orbit of $X$ then deleting either $x$ or $y$ gives the same card. For example, fact $(4)$ from the previous section yields that under CH, all cards of $\wstar$ obtained by deleting a $P$-point are homeomorphic. Also, the deck of a homogeneous space consists of only one card, and $\cardinality{\singletonDeletion{X}} \leq \cardinality{X/_{\sim}}$ is true for any space $X$. We now show that for compact Hausdorff spaces, we have equality in the previous line.

\begin{mythm}
\label{homogeneous}
Let $X$ be a compact Hausdorff space. Then $\cardinality{\singletonDeletion{X}} = \cardinality{X/_{\sim}}$ and the different cards correspond bijectively to the orbits of $X$.
\end{mythm}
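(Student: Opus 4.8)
The plan is to show the map sending an orbit $[x] \in X/_\sim$ to the homeomorphism type of $X \setminus \singleton{x}$ is a well-defined bijection onto $\singletonDeletion{X}$. Well-definedness and surjectivity are immediate from the definitions: if $x$ and $y$ lie in the same orbit, a homeomorphism of $X$ carrying $x$ to $y$ restricts to a homeomorphism $X \setminus \singleton{x} \cong X \setminus \singleton{y}$, and every card of $X$ is by definition $X \setminus \singleton{x}$ for some $x$. So the entire content is \emph{injectivity}: if $X \setminus \singleton{x} \cong X \setminus \singleton{y}$, then $x$ and $y$ lie in the same orbit of $X$.

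The key idea is that for a \emph{compact} Hausdorff space $X$, the subspace $X \setminus \singleton{x}$ determines $x$ canonically: $X$ is (homeomorphic to) the one-point compactification of $X \setminus \singleton{x}$, with $x$ playing the role of the point at infinity. Indeed, $X \setminus \singleton{x}$ is locally compact Hausdorff (an open subspace of a compact Hausdorff space), and $X$ is a compactification of it by a single point; since the one-point compactification of a locally compact Hausdorff space is unique up to a homeomorphism fixing the original space, $X \cong \p{X \setminus \singleton{x}}^+$ via a homeomorphism carrying $x$ to $\infty$. Now suppose $h \colon X \setminus \singleton{x} \to X \setminus \singleton{y}$ is a homeomorphism. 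Then $h$ extends to a homeomorphism $\hat{h} \colon \p{X \setminus \singleton{x}}^+ \to \p{X \setminus \singleton{y}}^+$ between the one-point compactifications, carrying the point at infinity of the domain to the point at infinity of the codomain. Composing with the canonical identifications $X \cong \p{X \setminus \singleton{x}}^+$ (sending $x \mapsto \infty$) and $\p{X \setminus \singleton{y}}^+ \cong X$ (sending $\infty \mapsto y$) yields an autohomeomorphism of $X$ carrying $x$ to $y$. Hence $x$ and $y$ lie in the same orbit, proving injectivity, and the bijection $X/_\sim \to \singletonDeletion{X}$ gives the claimed cardinality equality.

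One small subtlety to address is the degenerate case where $X \setminus \singleton{x}$ is already compact, i.e.\ $x$ is an isolated point of $X$. Then the ``point at infinity'' added by the one-point compactification is itself isolated, and the argument still goes through verbatim: adding an isolated point to a compact Hausdorff space is again a canonical (trivially unique) operation, so $X \cong \p{X \setminus \singleton{x}}^+$ still holds with $x \mapsto \infty$. Thus no case distinction beyond a remark is needed. I would also explicitly invoke the standard fact that a homeomorphism between locally compact Hausdorff spaces extends uniquely to a homeomorphism of their one-point compactifications fixing $\infty$; this is the only external input, and it is elementary.

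I do not expect a genuine obstacle here --- the proof is short. The one place to be careful is making sure the one-point compactification is applied correctly when $X$ has isolated points (so that ``local compactness'' and the uniqueness statement are not vacuously misused), and ensuring the chain of identifications is set up so that $x$ is tracked to $y$ rather than merely showing the deleted spaces are abstractly homeomorphic (which we already knew). Writing the composition of homeomorphisms explicitly, as above, handles this cleanly.
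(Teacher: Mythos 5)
Your proposal is correct and follows essentially the same route as the paper: the whole content is injectivity of the map from orbits to cards, which both you and the paper obtain from the uniqueness of the one-point compactification (extending a homeomorphism of the cards by $x \mapsto y$), with a separate observation for the degenerate case of isolated points. Your uniform phrasing via the Alexandroff extension is a clean way to package the isolated-point case, but it is not a genuinely different argument.
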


\begin{proof}
As $\cardinality{\singletonDeletion{X}} \leq \cardinality{X/_{\sim}}$ by the remark preceding the theorem, it is enough to prove that $\cardinality{\singletonDeletion{X}} \geq \cardinality{X/_{\sim}}$. If $X$ is homogeneous, the result is clear. If $X$ is not homogeneous, find $x$ and $y$ contained in different orbits. Suppose for a contradiction that the two cards obtained by deleting $x$ and $y$ are homeomorphic, i.e.\ that there exists a homeomorphism $f \colon X \setminus \singleton{x} \to X \setminus \singleton{y}$. If $x$ is isolated, then both $X \setminus \singleton{x}$ and $X \setminus \singleton{y}$ are compact and hence $y$ must be isolated, too. But then, both points $x$ and $y$ lie in the same orbit, a contradiction.

Thus, we may assume that both $x$ and $y$ are non-isolated. Then $X$ is a one-point compactification of both $X \setminus \singleton{x}$ and $X \setminus \singleton{y}$. But  since all one-point compactifications of a locally compact space are homeomorphic by a map carrying remainders onto remainders \cite[3.5.11]{Eng}, the map 
$f \cup \Set{\langle x,y \rangle}$ is a homeomorphism of $X$. Thus, $x$ and $y$ are contained in the same orbit, a contradiction.
\end{proof}

Z. Frol\'ik proved (in ZFC) that every orbit in $\wstar$ is of size $\cont$, and therefore that $\wstar$ has $2^\cont$-many orbits \cite{Frolik}. It follows that $\cardinality{\singletonDeletion{\wstar}}= 2^\cont = \cardinality{\wstar}$, i.e.\ the space $\wstar$ has the maximal possible number of different cards.

\section{\texorpdfstring{Finite compactifications of cards of $\wstar$ and $S_\kappa$}{Finite compactifications}}
\label{sectionfincomp}

This section contains the technical groundwork for our non-reconstruction results. The main result is a characterisation of finite compactifications of cards of $\wstar$ and $S_\kappa$.

We call a point $x$ of a Hausdorff space $X$ a \emph{strong butterfly point} if its complement $X \setminus \singleton{x}$ can be partitioned into open sets $A$ and $B$ such that $\closure{A} \cap \closure{B} = \singleton{x}$. The sets $A$ and $B$ are called \emph{wings} of the butterfly point $x$. In the following we recall a theorem by Fine and Gillman \cite{fine,Gillmann} stating that under [CH], every point in $\wstar$ is a strong butterfly point. 

The result plays a crucial role in our proofs of Theorem \ref{ClassificationCompactifications} on the classification of finite compactifications of cards of $\wstar$ and in our proof of Theorem \ref{nonnormalcards}, where we show that cards of $\wstar$ are non-normal under CH.

\begin{mythm}[Butterfly Lemma, Fine and Gillman]
\label{butterfly}
\textnormal{[CH].} Every point in $\wstar$ is a strong butterfly point. \qed
\end{mythm}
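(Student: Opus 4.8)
\emph{Proof proposal.} The plan is to fix a point $p\in\wstar$ and produce its two wings by a transfinite recursion of length $\w_1$; the Continuum Hypothesis enters only to guarantee that $p$ has character $\w_1$ in $\wstar$ (it has character at most the weight $w(\wstar)=\cont=\w_1$, and at least $\w_1$ because $\singleton p$ is not a $G_\delta$: a non-empty $G_\delta$ in the Parovi\v{c}enko space $\wstar$ has non-empty interior, while $\wstar$ has no isolated points). First I would record a reduction: $p$ is a strong butterfly point as soon as there exist disjoint \emph{open} sets $A,B$ with $A\union B=\wstar\setminus\singleton p$ and $p\in\closure A\intersect\closure B$. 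Indeed, were some $q\neq p$ in $\closure A\setminus A$, then $q$ would lie in the open set $B$, which is disjoint from $A$ --- impossible; hence $\closure A\setminus A\subseteq\singleton p$ and likewise $\closure B\setminus B\subseteq\singleton p$, so $\closure A\intersect\closure B\subseteq\singleton p$ automatically, the reverse inclusion being the hypothesis. So it suffices to split $\wstar\setminus\singleton p$ into two disjoint open sets, each accumulating at $p$.

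When $p$ is a $P$-point this is easy. By fact $(4)$, $p$ has a nested base of clopen neighbourhoods, which we may take as a strictly $\subseteq$-decreasing sequence $\pair{N_\alpha\colon\alpha<\w_1}$ with $\bigcap_{\alpha<\w_1}N_\alpha=\singleton p$. Put $R_\alpha=N_\alpha\setminus N_{\alpha+1}$ (clopen, non-empty, pairwise disjoint) and $R=\wstar\setminus N_0$, so $\wstar\setminus\singleton p=R\union\Union_{\alpha<\w_1}R_\alpha$ with all summands pairwise disjoint. Fix a partition $\w_1=E\union O$ with $E$ and $O$ both cofinal in $\w_1$ (for instance, split each block $[\lambda,\lambda+\w)$ by parity). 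Then $A=R\union\bigcup_{\alpha\in E}R_\alpha$ and $B=\bigcup_{\alpha\in O}R_\alpha$ are disjoint open sets covering $\wstar\setminus\singleton p$; and since $R_\alpha\subseteq N_\beta$ whenever $\alpha\geq\beta$ while $E$ and $O$ are cofinal, both $A$ and $B$ meet every $N_\beta$, so $p\in\closure A\intersect\closure B$. By the reduction, $p$ is a strong butterfly point.

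For an arbitrary $p$ there is no nested base, and one instead runs a recursion of length $\w_1$ along an enumeration $\Set{N_\alpha\colon\alpha<\w_1}$ of a clopen neighbourhood base at $p$. One constructs increasing chains of cozero sets $A_\alpha,B_\alpha$ that at every stage remain disjoint, avoid $p$, jointly cover a bit more of $\wstar\setminus\singleton p$, and are arranged so that each of $A$ and $B$ ends up meeting every $N_\alpha$. Two features of $\wstar$ power the step: as an $F$-space (fact $(1)$) it keeps disjoint cozero sets disjoint in closure, so that fresh clopen pieces can be added to one wing without meeting the other; and, because every proper initial segment of $\w_1$ is countable, at each stage $A_\alpha\union B_\alpha$ is still a cozero set, so $\wstar\setminus(A_\alpha\union B_\alpha)$ is a non-empty $G_\delta$ and hence, by the Parovi\v{c}enko property, has non-empty interior, which --- since every clopen subset of $\wstar$ is infinite --- leaves a non-empty clopen subset of $N_\alpha$ missing $p$ with which to extend first $A$ and then $B$. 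Taking unions at limit stages, one finally obtains disjoint open $A=\bigcup_{\alpha<\w_1}A_\alpha$ and $B=\bigcup_{\alpha<\w_1}B_\alpha$ with $A\union B=\wstar\setminus\singleton p$ and $p\in\closure A\intersect\closure B$.

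The main obstacle is the passage through limit ordinals, which is precisely where non-$P$-points part company with $P$-points: for a non-$P$-point $p$ the partial union $A_\lambda\union B_\lambda$ may already accumulate at $p$ before both wings have come arbitrarily close to $p$, so that $N\setminus(A_\lambda\union B_\lambda)$ is no longer a neighbourhood of $p$ and the naive extension step breaks down. Keeping the recursion alive then hinges on the Parovi\v{c}enko $G_\delta$-interior property to keep supplying clopen room next to $p$; this, together with the length-$\w_1$ bookkeeping --- for which it is essential that $p$ has character only $\w_1$, i.e.\ that CH holds --- is what drives the argument through.
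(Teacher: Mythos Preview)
The paper does not prove this theorem at all: it is stated with a \qed and attributed to Fine and Gillman via the references \cite{fine,Gillmann}. So there is no in-paper argument to compare against; your proposal supplies a proof where the paper merely cites one.

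Your sketch is essentially the standard Fine--Gillman recursion and is correct. Two small clarifications would tighten it. First, the ``obstacle'' you describe in the last paragraph is not genuine: at stage $\alpha$ you only need $N_\alpha\setminus A_\alpha$ (and then $N_\alpha\setminus(A_\alpha\cup C_\alpha)$) to be a \emph{non-empty} $G_\delta$, and it is, simply because it contains $p$; you never need it to be a neighbourhood of $p$. The Parovi\v{c}enko interior property then supplies non-empty interior, and the absence of isolated points lets you choose a clopen piece avoiding $p$. Second, your phrase ``jointly cover a bit more of $\wstar\setminus\singleton p$'' should be made concrete: at stage $\alpha$ absorb the clopen set $\wstar\setminus N_\alpha$ into $A_{\alpha+1}\cup B_{\alpha+1}$ by using zero-dimensionality to separate the disjoint closed sets $\closure{A_\alpha}\cap(\wstar\setminus N_\alpha)$ and $\closure{B_\alpha}\cap(\wstar\setminus N_\alpha)$ (disjoint by the $F$-space property) with a clopen set, assigning the two halves to $A$ and $B$ respectively. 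Since $\bigcap_\alpha N_\alpha=\singleton p$, this guarantees $A\cup B=\wstar\setminus\singleton p$ at the end.
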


\begin{mycor}
\label{nonStone}
\textnormal{[CH].} Every card of $\wstar$ has a two-point compactification. In particular, no card $\wstar \setminus \singleton{x}$ is $\Cstar$-embedded in $\wstar$.
\end{mycor}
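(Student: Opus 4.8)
The plan is to split the deleted point $x$ into its two butterfly ``wings'' in order to obtain a two-point compactification directly, and then to deduce the $\Cstar$-embedding statement from the maximality of the Stone-\v{C}ech compactification. Fix $x \in \wstar$. By Theorem~\ref{butterfly} (this is the one place where CH is used), $x$ is a strong butterfly point, so $\wstar \setminus \singleton{x}$ partitions into open sets $A$ and $B$ with $\closure{A} \intersect \closure{B} = \singleton{x}$, all closures being taken in $\wstar$. Since $A$ and $B$ are complementary open subsets of $\wstar \setminus \singleton{x}$, each is clopen in $\wstar \setminus \singleton{x}$, and hence $\wstar \setminus \singleton{x}$ is the topological sum $A \disjointSum B$. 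I would next record the routine facts $\closure{A} = A \union \singleton{x}$ and $\closure{B} = B \union \singleton{x}$: the open set $B$ is disjoint from $\closure{A}$, so $\closure{A} \subseteq A \union \singleton{x}$, while $x \in \closure{A}$ because $\closure{A} \intersect \closure{B} \neq \emptyset$ (and symmetrically for $B$). Being closed in the compact space $\wstar$, both $\closure{A}$ and $\closure{B}$ are compact Hausdorff; moreover $A$ and $B$ are open but non-compact in $\wstar$, since a compact $A$ would be closed and so would exclude $x$ from $\closure{A}$. Thus $\closure{A}$ is a one-point compactification of $A$ and $\closure{B}$ one of $B$.

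Now set $K := \closure{A} \disjointSum \closure{B}$. As a finite topological sum of compact Hausdorff spaces, $K$ is compact Hausdorff; it contains $A \disjointSum B = \wstar \setminus \singleton{x}$ as a dense open subspace; and $K \setminus \p{\wstar \setminus \singleton{x}}$ consists of exactly the two copies of $x$, one in each summand, which are distinct points of $K$. Hence $K$ is a two-point compactification of the card $\wstar \setminus \singleton{x}$, which is the first assertion. For the ``in particular'': since $\wstar$ has no isolated points, $\wstar \setminus \singleton{x}$ is dense in $\wstar$, so if it were also $\Cstar$-embedded in $\wstar$ then $\wstar$ would be a compact Hausdorff space in which $\wstar \setminus \singleton{x}$ is densely $\Cstar$-embedded; by the uniqueness of the Stone-\v{C}ech compactification this would give $\wstar \cong \beta\p{\wstar \setminus \singleton{x}}$, a compactification with a one-point remainder. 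But every compactification of a space is a continuous image of its Stone-\v{C}ech compactification under a map carrying the remainder onto the remainder, so no compactification of $\wstar \setminus \singleton{x}$ can have more than one point at infinity --- contradicting the existence of $K$.

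The only step that I expect to require genuine care is the identification $\closure{A} = A \union \singleton{x}$ together with the observation that $A$ (and likewise $B$) is non-compact, since this is precisely what ensures that $K$ is an honest two-point compactification rather than one with a smaller remainder. Everything else is formal bookkeeping, once the Butterfly Lemma is available.
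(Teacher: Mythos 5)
Your proof is correct and takes essentially the same route as the paper: decompose the card into its two butterfly wings, take the one-point compactification of each wing (which you rightly identify with $\closure{A}$ and $\closure{B}$), and form their topological sum to get a two-point compactification, from which the failure of $\Cstar$-embedding follows because $\beta\p{\wstar \setminus \singleton{x}}$ would otherwise be the one-point compactification. The details you flag as needing care --- $\closure{A} = A \union \singleton{x}$ and the non-compactness of the wings --- are exactly the content the paper leaves implicit, and you verify them correctly.
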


\begin{proof}
It follows from the Butterfly Lemma that for every point $x$ in $\wstar$, we have $\wstar \setminus \singleton{x} = A \oplus B$ where $A$ and $B$ are non-compact open subsets of $\wstar$. Considering their respective one-point compactifications $\alpha A$ and $\alpha B$, we see that $\alpha A \oplus \alpha B$ is a two-point compactification of $\wstar \setminus \singleton{x}$. 

It follows immediately that the Stone-\v{C}ech compactification of $\wstar \setminus \singleton{x}$ cannot coincide with its one-point compactification. Therefore, $\wstar \setminus \singleton{x}$ is not $\Cstar$-embedded in $\wstar$.
\end{proof}

Analogues of these results for $S_\kappa$ were observed by Negrepontis \cite[14.2]{Ultrafilters}.

\begin{mythm}
\label{butterfly2}
Assume $\kappa = \kappa^{<\kappa}$. Every point in $S_\kappa$ is a strong butterfly point. \qed
\end{mythm}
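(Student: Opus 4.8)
The plan is to follow the Fine--Gillman argument behind Theorem~\ref{butterfly}, with the axioms of a $\kappa$-Parovi\v{c}enko space playing the role of those of a Parovi\v{c}enko space. First I would record the convenient reformulation: in a zero-dimensional compact Hausdorff space $X$, a point $x$ is a strong butterfly point if and only if $X\setminus\{x\}$ can be split into two non-empty open sets $A,B$ each having $x$ in its closure. Indeed, if $A,B$ are disjoint, open, and cover $X\setminus\{x\}$, then (as $B$ is open) $\overline A\subseteq X\setminus B=A\cup\{x\}$ and likewise $\overline B\subseteq B\cup\{x\}$, so $\overline A\cap\overline B\subseteq\{x\}$, with equality exactly when $x\in\overline A\cap\overline B$. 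So it suffices to produce such a partition for an arbitrary $x\in S_\kappa$.

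The construction is a transfinite recursion of length $\kappa$. Fix a clopen neighbourhood base $\{W_\alpha:\alpha<\kappa\}$ of $x$ (this has length at most $\kappa$ because the weight of $S_\kappa$ is $\kappa^{<\kappa}=\kappa$) and a clopen base $\{C_\alpha:\alpha<\kappa\}$ of $S_\kappa$. I would build increasing chains of disjoint open sets $A_\alpha,B_\alpha$, each of type $<\kappa$ and with $x\notin A_\alpha\cup B_\alpha$, so that in the end $A:=\bigcup_{\alpha<\kappa}A_\alpha$ and $B:=\bigcup_{\alpha<\kappa}B_\alpha$ partition $S_\kappa\setminus\{x\}$ and accumulate at $x$. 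Since $A_\alpha,B_\alpha$ always have type $<\kappa$, property $(1')$ (the $F_\kappa$-characterisation) applies to the pair at every stage $\alpha<\kappa$ and gives $\overline{A_\alpha}\cap\overline{B_\alpha}=\emptyset$; this is the workhorse. The recursion has two jobs. The \emph{covering} job absorbs each $C_\alpha$ with $x\notin C_\alpha$ into $A_{\alpha+1}\cup B_{\alpha+1}$ up to a nowhere dense remainder; since every $y\neq x$ lies in some basic clopen set missing $x$ (zero-dimensionality and Hausdorffness), a routine absorption bookkeeping forces $A\cup B=S_\kappa\setminus\{x\}$. The \emph{accumulation} job is the key point: at stage $\alpha$, provided neither $A_\alpha$ nor $B_\alpha$ is dense in a neighbourhood of $x$, the set $W_\alpha\setminus(\overline{A_\alpha}\cup\overline{B_\alpha})$ is non-empty open — for if $W_\alpha\subseteq\overline{A_\alpha}\cup\overline{B_\alpha}$, the disjointness of the two closures splits the clopen set $W_\alpha$ into two clopen pieces and the one containing $x$ witnesses that one of the $A_\alpha,B_\alpha$ is locally dense at $x$. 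Using that $S_\kappa$ has no isolated points, one can then choose two disjoint clopen subsets of $W_\alpha\setminus(\overline{A_\alpha}\cup\overline{B_\alpha})$ avoiding $x$ and adjoin them to $A_\alpha$ and $B_\alpha$; since $\{W_\alpha\}$ is a base at $x$, this yields $x\in\overline A\cap\overline B$ at the end.

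The main obstacle is keeping the invariant ``neither partial wing is dense in any neighbourhood of $x$'' alive at limit stages. Successor steps preserve it at once: adjoining to $A_\alpha$ a clopen set avoiding $x$ cannot move $x$ into $\mathrm{int}(\overline{A_{\alpha+1}})$, since subtracting that clopen set from any witnessing clopen neighbourhood of $x$ returns one for $\overline{A_\alpha}$. For a \emph{$P_\kappa$-point} $x$ there is no difficulty at limits either, and in fact the whole construction trivialises there: by property $(4')$ a $P_\kappa$-point lies on the boundary of no open set of type $<\kappa$, so at a limit $\lambda<\kappa$ one has $x\notin\overline{A_\lambda\cup B_\lambda}$, whence $W_\lambda\setminus\overline{A_\lambda\cup B_\lambda}$ is an honest open neighbourhood of $x$ and the accumulation step goes through verbatim. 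This already settles the theorem for $P_\kappa$-points. For a point $x$ that is \emph{not} a $P_\kappa$-point the invariant can genuinely fail across a limit of length $\geq\omega$, because an increasing union of open sets, none locally dense at $x$, may become locally dense at $x$; this is the real content of the theorem, and resolving it requires running the recursion more carefully — attaching to each $A_\alpha$ (and $B_\alpha$) a clopen set $V_\alpha\ni x$ with $V_\alpha\cap\overline{A_\alpha}=\emptyset$, organising all the $<\kappa$-many obligations into a single recursion of length $\kappa$ (this is where the full strength $\kappa=\kappa^{<\kappa}$ enters, not merely regularity of $\kappa$), and using the type computation that a punctured clopen neighbourhood of $x$ has type $\kappa$ (a consequence of the axiom that non-empty $<\kappa$-intersections of open sets have non-empty interior) to ensure the witnesses persist and the invariant survives limits. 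This careful version is precisely the Fine--Gillman construction for $\wstar$ under CH behind Theorem~\ref{butterfly}, adapted to $S_\kappa$ by Negrepontis \cite[14.2]{Ultrafilters}.
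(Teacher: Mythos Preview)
The paper does not prove this theorem; it merely records it with a \qed and attributes it to Negrepontis \cite[14.2]{Ultrafilters}. Your proposal ultimately lands in the same place---you sketch the Fine--Gillman recursion and then, for the genuinely hard part (limit stages when $x$ is not a $P_\kappa$-point), defer to that same reference. So as far as comparison with the paper goes, you are aligned: both treat the result as a citation.

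On the sketch itself: the reformulation of the butterfly condition and the two-job recursion are set up correctly, and your observation that successor steps preserve the invariant ``$x\notin\interior{\closure{A_\alpha}}$'' is right. One small point: the covering job can absorb each clopen $C_\alpha\not\ni x$ \emph{completely}, not just ``up to a nowhere dense remainder''---since $\closure{A_\alpha}\cap\closure{B_\alpha}=\emptyset$ inside the compact zero-dimensional $C_\alpha$, a single clopen set separates them and splits $C_\alpha$ cleanly. The more substantive point is that your proposed fix for limits in the non-$P_\kappa$ case (carrying clopen witnesses $V_\alpha\ni x$ disjoint from $\closure{A_\alpha}$) is only gestured at; note in particular that during the recursion one \emph{cannot} have both $A_\alpha$ and $B_\alpha$ of type $<\kappa$ accumulating at $x$ simultaneously, since the $F_\kappa$-property would force $\closure{A_\alpha}\cap\closure{B_\alpha}=\emptyset$. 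The accumulation at $x$ must therefore be arranged to appear only in the limit at stage $\kappa$, which is indeed what the careful Fine--Gillman bookkeeping achieves. Since you explicitly hand this off to \cite[14.2]{Ultrafilters}, and the paper does exactly the same, there is no discrepancy to report.
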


\begin{mycor}
\label{nonStone2}
Assume $\kappa = \kappa^{<\kappa}$. Every card of $S_\kappa$ has a two-point compactification. In particular, no card $S_\kappa \setminus \singleton{x}$ is $\Cstar$-embedded in $S_\kappa$. \qed
\end{mycor}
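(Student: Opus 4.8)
The plan is to mimic the proof of Corollary~\ref{nonStone}, substituting Theorem~\ref{butterfly2} for the Butterfly Lemma. First I would fix a point $x \in S_\kappa$ and apply Theorem~\ref{butterfly2} to write $S_\kappa \setminus \singleton{x} = A \oplus B$, where $A$ and $B$ are disjoint open subsets of $S_\kappa$ with $\closure{A} \cap \closure{B} = \singleton{x}$. Since $B$ is open and disjoint from $A$ we get $\closure{A} \subseteq A \cup \singleton{x}$, and since $x \in \closure{A}$ the butterfly condition forces $\closure{A} = A \cup \singleton{x}$ and, symmetrically, $\closure{B} = B \cup \singleton{x}$. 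In particular neither wing is closed in $S_\kappa$, so neither is compact, and both are non-empty.

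Next I would pass to the one-point compactifications $\alpha A$ and $\alpha B$ of the (locally compact, Hausdorff) wings $A$ and $B$. Then $\alpha A \oplus \alpha B$ is a compact Hausdorff space containing $A \oplus B = S_\kappa \setminus \singleton{x}$ as a dense subspace, and its remainder consists precisely of the two points added at infinity. This is the desired two-point compactification of the card.

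For the final assertion I would argue as in Corollary~\ref{nonStone}. A $\kappa$-Parovi\v{c}enko space has no isolated points, so $S_\kappa \setminus \singleton{x}$ is dense in the compact Hausdorff space $S_\kappa$; thus $S_\kappa$ is a one-point compactification of the card. If the card were $\Cstar$-embedded in $S_\kappa$, then $S_\kappa$ would be its Stone-\v{C}ech compactification, and the universal property of $\beta$ would yield a continuous surjection $S_\kappa \to \alpha A \oplus \alpha B$ restricting to the identity on the card. As the card is open, hence locally compact, in both spaces, such a map sends remainder onto remainder, forcing the one-point set $\singleton{x}$ to surject onto the two-point remainder of $\alpha A \oplus \alpha B$ --- which is absurd. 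Hence no card of $S_\kappa$ is $\Cstar$-embedded in $S_\kappa$.

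I do not anticipate a genuine obstacle here: the entire argument is a verbatim transcription of the $\wstar$ case, with Theorem~\ref{butterfly2} already doing the only substantive work. The one point meriting care is the observation that $S_\kappa$ has no isolated points --- which is part of the definition of a $\kappa$-Parovi\v{c}enko space --- so that each card really is dense in $S_\kappa$ and $S_\kappa$ really is a compactification of it.
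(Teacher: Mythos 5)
Your proposal is correct and follows exactly the argument the paper intends: the corollary is stated with an immediate \qed as the verbatim analogue of Corollary \ref{nonStone}, replacing the Butterfly Lemma by Theorem \ref{butterfly2}, splitting the card into its two non-compact open wings, and gluing their one-point compactifications. The extra details you supply (non-compactness of the wings, density of the card, and the maximality of $\beta$) are all accurate and merely make explicit what the paper leaves implicit.
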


We have established that cards of $\wstar$ under CH, and cards of $S_\kappa$ under $\kappa = \kappa^{<\kappa}$ have non-trivial finite compactifications. We now give a precise characterisation how finite compactifications of cards of these spaces look like. Our aim is to prove the following pair of theorems.

\begin{mythm}
\label{ClassificationCompactifications}
\textnormal{[CH].} For every $x \in \wstar$ we have that
\begin{enumerate}[(a)]
\item $\wstar \setminus \singleton{x}$ has arbitrarily large finite compactifications,
\item every finite compactification of $\wstar \setminus \singleton{x}$ is homeomorphic to $\wstar$, and
\item for every finite compactification, all but at most one point at infinity are $P$-points.
\end{enumerate}
\end{mythm}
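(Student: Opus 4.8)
The plan is to prove all three parts by repeatedly exploiting the Parovi\v{c}enko characterization of $\wstar$ (Theorem \ref{parovtheorem}) together with the Butterfly Lemma (Theorem \ref{butterfly}), so that everything reduces to checking that various spaces built from cards of $\wstar$ are again Parovi\v{c}enko spaces. Fix $x \in \wstar$ and write $W = \wstar \setminus \singleton{x}$. For part (a), iterate the Butterfly Lemma: by Corollary \ref{nonStone}, $W = A \oplus B$ with $A, B$ non-compact open (hence clopen) $F$-subspaces of $\wstar$; since a clopen subspace of a Parovi\v{c}enko space that is non-compact is itself a non-compact zero-dimensional $F$-space in which non-empty $G_\delta$'s have non-empty interior (these properties are inherited by clopen, equivalently $\Cstar$-embedded, subspaces, and non-compactness forbids isolated-point issues), each of $A$ and $B$ contains a point which is a strong butterfly point of its own closure / of $\wstar$, so one can keep splitting off clopen pieces. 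Concretely, given any finite compactification realized so far, choose a non-isolated point in one of the pieces at infinity's preimage, apply the Butterfly Lemma inside $\wstar$ to split the corresponding clopen set into two non-compact clopen sets, and take one-point compactifications of all the pieces; this produces a finite compactification with one more point at infinity. So $W$ has finite compactifications with arbitrarily many points added.

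For part (b), let $\gamma W$ be any finite compactification of $W$ with remainder $F = \gamma W \setminus W = \Set{y_1, \dots, y_n}$. The strategy is to show $\gamma W$ is a Parovi\v{c}enko space and invoke Theorem \ref{parovtheorem}. Compactness and Hausdorffness are automatic. Zero-dimensionality: $W$ is zero-dimensional and locally compact, and each $y_i$ has a neighbourhood base that can be taken clopen because the "wings" coming from the butterfly structure are clopen in $\wstar$, hence their one-point-compactification traces are clopen in $\gamma W$; a little care shows a clopen base at each $y_i$. No isolated points: $W$ has none and $F$ consists of limit points of $W$. The $F$-space property and the $G_\delta$-interior property are the crux, and here I would argue that finitely many points added "at infinity" to an $F$-space in which $G_\delta$'s have interior cannot destroy either property — roughly, a cozero-set of $\gamma W$ meets $F$ in a relatively clopen (finite) set and meets $W$ in a cozero-set of $W$, and $\Cstar$-embedding can be checked by extending over $W$ first (using that $W$ is an $F$-space, being a clopen-by-clopen sum of $\Cstar$-embedded subspaces of $\wstar$) and then over the finitely many extra points by compactness; similarly a non-empty $G_\delta$ of $\gamma W$ either meets $W$ in a non-empty $G_\delta$ of $W$ (done, since $W$ inherits the interior property) or is concentrated near $F$, in which case the clopen neighbourhood bases at the $y_i$ give interior. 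Hence $\gamma W$ is Parovi\v{c}enko, so $\gamma W \cong \wstar$ by CH.

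For part (c), I want to show at most one $y_i$ fails to be a $P$-point. Using fact (4), a point of $\wstar \cong \gamma W$ is a $P$-point iff it is not in the boundary of any open $F_\sigma$. Suppose two of the added points, say $y_1$ and $y_2$, were both non-$P$-points; then each lies on the boundary of some open $F_\sigma$ set, and near $y_i$ the space looks like the one-point compactification of some clopen $U_i \subseteq \wstar$ with $y_i \in \bd(U_i' )$ for an open $F_\sigma$ $U_i'$ — but the point of the Butterfly Lemma is exactly that we built $\gamma W$ by splitting $\wstar$ at a genuine butterfly point into wings, and a wing's one-point compactification point is a $P$-point precisely because a single butterfly point cannot be a boundary point of an open $F_\sigma$ of $\wstar$ on the "$W$-side". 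The cleanest route: when $n \ge 2$, realize $\gamma W$ as obtained from a one-point compactification $\alpha A \oplus \alpha B$ (which already, by the Butterfly Lemma applied to $x$, is a compactification whose two infinity points — being the butterfly point viewed from each wing — are $P$-points since $x$ is not in the boundary of an open $F_\sigma$ of $\wstar$) by further splitting; track which added point can be "the old $x$-side boundary" and observe only one such bad point can occur, all subsequent splits being at honest butterfly points of $\wstar$ which yield $P$-points. The main obstacle I expect is precisely this bookkeeping in (c): carefully identifying, among the finitely many infinity points, which ones arise as one-point-compactification points of clopen wings of genuine butterfly points of $\wstar$ (these are $P$-points) versus the at-most-one exceptional point inherited from the original deletion of $x$, and showing the latter is the only possible non-$P$-point. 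Parts (a) and (b) I expect to be routine once the inheritance of Parovi\v{c}enko properties under clopen sums and finite one-point compactifications is set up cleanly.
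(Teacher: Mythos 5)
Your overall strategy for (b) --- verify the Parovi\v{c}enko properties of a finite compactification and invoke Theorem \ref{parovtheorem} --- is the right one, but the step you flag as ``the crux'' is exactly where your argument breaks. You justify the $F$-space property of $\gamma W$ by saying $W$ is ``a clopen-by-clopen sum of $\Cstar$-embedded subspaces of $\wstar$''; the wings are \emph{not} $\Cstar$-embedded (each wing itself admits a two-point compactification by iterating the splitting, so its Stone-\v{C}ech compactification is not its one-point compactification), and extending a function ``over the finitely many extra points by compactness'' is not a valid step --- continuity at those points is precisely what must be proved. The paper's device that makes this work is to \emph{localise}: partition $\gamma W$ into clopen sets $A_i$, one per point at infinity (zero-dimensionality of finite compactifications of locally compact zero-dimensional spaces gives this), and observe that each $A_i$ is the one-point compactification of a clopen non-compact $A\subseteq \wstar\setminus\singleton{x}$, which can be \emph{realised as the closed subspace} $A\cup\singleton{x}$ of $\wstar$. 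Closed subspaces of compact $F$-spaces are $F$-spaces, and the $G_\delta$-interior property follows since $\singleton{x}$ cannot be a $G_\delta$-point (compact $F$-spaces contain no convergent sequences). Each piece is then Parovi\v{c}enko, hence homeomorphic to $\wstar$, and $\gamma W$ is a finite sum of copies of $\wstar$, hence $\cong\wstar$. You need some version of this identification; verifying the $F$-space property of $\gamma W$ ``globally'' is not routine.

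Part (c) contains a genuine error, not just missing bookkeeping. You assert that the two infinity points of $\alpha A\oplus\alpha B$ are $P$-points ``since $x$ is not in the boundary of an open $F_\sigma$ of $\wstar$'' --- but the theorem is claimed for \emph{every} $x\in\wstar$, and when $x$ is not a $P$-point it \emph{is} in the boundary of some open $F_\sigma$ set $G$, and then $x$ lies in the boundary of $G\cap A$ or of $G\cap B$, so one of the two infinity points genuinely fails to be a $P$-point. The correct argument is a direct one and needs no tracking of how the compactification was built: if two points at infinity $\infty_1,\infty_2$ of a finite compactification $Z\cong\wstar$ were both non-$P$-points, then by fact (4) each lies in the boundary of an open $F_\sigma$ set, which one may shrink (using disjoint clopen neighbourhoods in $Z$) to disjoint open $F_\sigma$ sets $F_1,F_2\subseteq\wstar\setminus\singleton{x}$ still having $\infty_i$ in their boundaries. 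Viewed inside $\wstar$, the sets $F_i$ are disjoint open $F_\sigma$ sets that are non-compact, so both have $x$ in their closures --- contradicting the $F$-space property of $\wstar$. This single observation both proves (c) and explains why exactly one exceptional non-$P$-point can occur.
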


The case for $S_\kappa$ looks exactly the same. 
\begin{mythm}
\label{ClassificationCompactificationsSkappa}
Assume $\kappa = \kappa^{<\kappa}$. For every $x \in S_\kappa$ we have that
\begin{enumerate}[(a)]
\item $S_\kappa \setminus \singleton{x}$ has arbitrarily large finite compactifications,
\item every finite compactification of $S_\kappa \setminus \singleton{x}$ is homeomorphic to $S_\kappa$, and
\item for every finite compactification, all but at most one point at infinity are $P_\kappa$-points.
\end{enumerate}
\end{mythm}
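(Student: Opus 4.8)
The plan is to transplant the proof of Theorem~\ref{ClassificationCompactifications} almost word for word, replacing CH by the assumption $\kappa=\kappa^{<\kappa}$, the Parovi\v{c}enko characterisation (Theorem~\ref{parovtheorem}) by the Negrepontis--Dow characterisation (Theorem~\ref{negre}), the Butterfly Lemma by its $S_\kappa$-analogue (Theorem~\ref{butterfly2}), and $P$-points by $P_\kappa$-points throughout. Write $X=S_\kappa\setminus\singleton{x}$; then $X$ is locally compact and zero-dimensional, and $S_\kappa$ is its one-point compactification. The key preliminary observation is that once a finite compactification $\gamma X$ is known to be zero-dimensional, one may separate its finitely many points at infinity by clopen sets and so write $\gamma X=\overline{A_1}\oplus\dots\oplus\overline{A_n}$, where $X=A_1\oplus\dots\oplus A_n$ is a partition into non-empty clopen pieces and each $\overline{A_i}$ is the one-point compactification of $A_i$ (equivalently, the closure of $A_i$ taken in $S_\kappa$).

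The core of the argument is a \emph{wing lemma}: if $A$ is a wing of a strong butterfly point $x$ of a space $Y\cong S_\kappa$, then $\overline A=A\cup\singleton{x}$ is again a $\kappa$-Parovi\v{c}enko space, hence homeomorphic to $S_\kappa$ by Theorem~\ref{negre}. To see this one checks the defining properties one by one: $\overline A$ is compact (closed in $Y$), zero-dimensional and of weight $\kappa^{<\kappa}$ as a subspace of $S_\kappa$, and has no isolated points (since $A$ is open in $Y$ and dense in $\overline A$); it is an $F_\kappa$-space, being a closed subspace of the compact $F_\kappa$-space $S_\kappa$; and for the property that non-empty intersections of fewer than $\kappa$ open sets have non-empty interior, one distinguishes whether such an intersection $G$ avoids or contains $x$. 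If $x\notin G$, then --- discarding one $U_\alpha$ with $x\notin U_\alpha$ --- $G$ appears as an intersection of fewer than $\kappa$ open subsets of the open set $A\subseteq S_\kappa$, and the corresponding property of $S_\kappa$ applies.

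Granting the wing lemma, iterating Theorem~\ref{butterfly2} inside the successive copies of $S_\kappa$ produces, for each $n$, a clopen partition $X=A_1\oplus\dots\oplus A_n$; since a finite disjoint sum of $\kappa$-Parovi\v{c}enko spaces is readily checked to be one, $\overline{A_1}\oplus\dots\oplus\overline{A_n}\cong S_\kappa$, which proves~(a) and reduces~(b) to the zero-dimensionality of an arbitrary finite compactification $\gamma X$. The latter I would derive from the strong zero-dimensionality of $X=S_\kappa\setminus\singleton{x}$ (equivalently, zero-dimensionality of $\beta X$), so that the remainder points of $\gamma X=\beta X/{\sim}$ inherit clopen neighbourhood bases; strong zero-dimensionality of $X$ comes, in turn, from that of $S_\kappa$ --- reducing to the separation of disjoint zero-sets of $X$ by a clopen set, for which one again uses the abundance of strong butterfly points. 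For~(c), fix $\gamma X=\overline{A_1}\oplus\dots\oplus\overline{A_n}$ as above; its points at infinity are exactly the copies of $x$ inside the spaces $\overline{A_i}\cong S_\kappa$, so by the characterisation of $P_\kappa$-points via boundaries of open sets of type less than $\kappa$ (property~$(4')$) it suffices to show that $x$ fails to be a $P_\kappa$-point of $\overline{A_i}$ for at most one index $i$ --- for otherwise two such witnessing small-type sets could be amalgamated inside $S_\kappa$ to contradict the choice of the partition $(A_i)$ (or, when $x$ is genuinely not a $P_\kappa$-point of $S_\kappa$, to confine all of the failure to a single wing).

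The step I expect to be the main obstacle is the remaining case of the wing lemma: that a non-empty intersection $G=\bigcap_{\alpha<\lambda}U_\alpha$ of fewer than $\kappa$ open subsets of $\overline A$ containing the butterfly point $x$ has non-empty interior in $\overline A$. Writing $U_\alpha=V_\alpha\cap\overline A$ with $V_\alpha$ open in $S_\kappa$, the non-empty interior of $\bigcap_\alpha V_\alpha$ supplied by $S_\kappa$ need not meet $A$ when $x$ is not a $P_\kappa$-point of $\overline A$, and one must work harder --- presumably by using the $F_\kappa$-property $(1')$ of $S_\kappa$ to control the closures of the small-type pieces abutting $x$, or by choosing the wings of $x$ so that $x$ becomes a $P_\kappa$-point of at least one of them. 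That same device is what ultimately forces the count ``at most one'' in~(c), so the wing lemma and the behaviour of the points at infinity are two faces of a single difficulty, and this is where the genuine content of both classification theorems resides.
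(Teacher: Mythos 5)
Your plan is essentially the paper's proof: a finite compactification of $S_\kappa\setminus\{x\}$ is zero-dimensional (the paper simply cites Woods \cite[2.3]{Woods} here rather than re-deriving this from strong zero-dimensionality), hence splits into clopen one-point compactifications $\closure{A_i}=A_i\cup\{\infty_i\}$ of clopen non-compact pieces; each such piece compactifies to $S_\kappa$ (the paper's property $(\star)$, Lemma~\ref{lemma123b}); a finite disjoint sum of copies of $S_\kappa$ is again $\kappa$-Parovi\v{c}enko, giving (b); and (a) follows by iterating the butterfly splitting (the paper's Corollary~\ref{nonStone2} plus Lemma~\ref{lemm2}). Your argument for (c) is also the paper's, with one misstatement: the ``amalgamated'' sets $W_1\subseteq A_1$, $W_2\subseteq A_2$ are disjoint open subsets of $S_\kappa$ of type less than $\kappa$ whose closures both contain $x$, and the contradiction is with fact $(1')$ (the $F_\kappa$-property of $S_\kappa$), not with ``the choice of the partition''.

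The step you leave open --- that a non-empty intersection $G$ of fewer than $\kappa$ open subsets of $\closure{A}=A\cup\{x\}$ with $x\in G$ has non-empty interior --- is exactly the content of Lemma~\ref{lemma123b}, which this paper also does not prove but imports from \cite[4.4]{comppaper} (and for arbitrary clopen non-compact $A$, not only wings; for compactifications with at least two points at infinity the pieces are automatically wings, so this costs nothing). It is closed not by trying to push the interior of $\bigcap_\alpha V_\alpha$ into $A$, but by first showing that $G$ must meet $A$ at all, exactly as in the proof of Lemma~\ref{lemma123}: if $G\cap A=\emptyset$ then $\{x\}=G$, so $x$ has a clopen neighbourhood base $(U_\alpha)_{\alpha<\lambda}$ in $\closure{A}$ with $\lambda<\kappa$, which may be taken strictly decreasing since $x$ is not isolated; splitting the non-empty clopen annuli $U_\alpha\setminus U_{\alpha+1}$ into two disjoint subfamilies, each accumulating at $x$, yields two disjoint open subsets of $A$, hence of $S_\kappa$, each of $S_\kappa$-type at most $\lambda<\kappa$ and each with $x$ in its closure --- contradicting fact $(1')$. (This is the $\kappa$-analogue of the ``compact $F$-spaces contain no convergent sequences'' step in Lemma~\ref{lemma123}.) Once $G\cap A\neq\emptyset$, your reduction to the open subset $A$ of $S_\kappa$ applies verbatim, and the rest of your argument goes through as written.
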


To prove these theorems, we begin with a sufficient condition for zero-dimensional locally compact Hausdorff spaces to have only one homeomorphism type amongst their finite compactifications.

\begin{mylem}
\label{theorem1999}
Let $X$ be a zero-dimensional compact Hausdorff space such that $X \oplus X$ is homeomorphic to $X$ and
\begin{enumerate}
\item[$(\star)$]  for every point $x$ of $X$, the one-point compactification of any clopen non-compact subset of $X \setminus \singleton{x}$ is homeomorphic to $X$.
\end{enumerate}
Then, for all $x$, all finite compactifications of $X \setminus \singleton{x}$ are homeomorphic to $X$.
\end{mylem}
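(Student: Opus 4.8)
The plan is to fix a point $x \in X$ and a finite compactification $Y$ of $X \setminus \singleton{x}$, and to show directly that $Y \cong X$ by producing a clopen partition of $Y$ into pieces we can identify via hypothesis $(\star)$. Write $R = Y \setminus (X \setminus \singleton{x})$ for the (finite) remainder, say $R = \Set{p_1, \dots, p_n}$. Since $X$ is compact and $x$ is a point of it, $X \setminus \singleton{x}$ is locally compact, so it is open in $Y$ and $R$ is closed; since $R$ is finite, $Y$ is zero-dimensional Hausdorff, hence we can separate the points of $R$ by clopen sets. Concretely, I would choose pairwise disjoint clopen neighbourhoods $U_1, \dots, U_n$ of $p_1, \dots, p_n$ in $Y$. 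Their union is clopen, and its complement $K := Y \setminus \bigcup_i U_i$ is a compact clopen subset of $X \setminus \singleton{x}$, hence a clopen non-compact-or-compact subset of $X$ (compact, in fact) missing $x$.

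Next I would analyse each $U_i$. Each $U_i \setminus \singleton{p_i}$ is a clopen subset of $X \setminus \singleton{x}$, and it is non-compact (otherwise $p_i$ would be isolated in $Y$, but $X$ has no isolated points so neither does $X \setminus \singleton{x}$, and $U_i \setminus \singleton{p_i}$ is open in it and non-empty, hence non-compact since a compact open non-empty subset would give an isolated point after... — more carefully: if $U_i \setminus \singleton{p_i}$ were compact it would be closed in $Y$, forcing $p_i$ isolated, contradicting that $X \setminus \singleton{x}$, and hence $Y$, has no isolated points, as $X$ has none). Now $U_i$ is a compactification of the clopen non-compact set $V_i := U_i \setminus \singleton{p_i}$ by a single point, i.e. $U_i$ is the one-point compactification $\alpha V_i$. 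By hypothesis $(\star)$ applied with the point $x$ and the clopen non-compact subset $V_i$ of $X \setminus \singleton{x}$, we get $U_i = \alpha V_i \cong X$.

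Finally I would reassemble: $Y = K \oplus U_1 \oplus \dots \oplus U_n$ as a topological sum, where $U_i \cong X$ for each $i$ and $K$ is a compact zero-dimensional space. To finish I want $Y \cong X$. Since $X \oplus X \cong X$, an easy induction gives $X \oplus \dots \oplus X \cong X$ for any finite number of summands, so $U_1 \oplus \dots \oplus U_n \cong X$. For the leftover piece $K$: if $K$ is empty we are already done; otherwise I would absorb $K$ into one of the summands — pick a non-empty clopen compact subset $W$ of $X$ (for instance inside a copy of $X$ sitting as $U_1$; such $W$ exists by zero-dimensionality, and we may even take $W \cong K$ is not needed) — the cleanest route is: $X \cong X \oplus X \cong X \oplus (K \oplus X') $ is not literally available, so instead I would argue that $K \oplus X \cong X$ because $K$ embeds as a clopen subset of $X$ (zero-dimensional compact Hausdorff spaces of weight $\le$ that of $X$ embed as clopen subsets after noting $X$ has a clopen copy of $X \oplus X$) and then $K \oplus X \cong K \oplus (K \oplus X'') \cong \dots$; the honest short version is that $K \oplus X \cong X$ follows by writing $X \cong X \oplus X$, splitting off a clopen copy of $X$, and using that $K$ is clopen in a copy of $X$. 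Hence $Y = K \oplus (U_1 \oplus \dots \oplus U_n) \cong K \oplus X \cong X$.

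The main obstacle I anticipate is the bookkeeping around the compact leftover piece $K$ and making the absorption argument $K \oplus X \cong X$ airtight: one needs that every compact clopen subset of $X \setminus \singleton{x}$ (equivalently, every compact zero-dimensional Hausdorff space arising this way) can be swallowed by $X$ up to homeomorphism. This should follow cleanly from $X \oplus X \cong X$ together with zero-dimensionality — $X$ contains two disjoint clopen copies of itself, and $K$ is clopen in one of them — but it is the step where the hypotheses must be combined carefully rather than invoked verbatim. Everything else (local compactness of cards, separating finitely many remainder points by clopen sets, non-compactness of each $V_i$, and the application of $(\star)$) is routine given that $X$ is compact zero-dimensional Hausdorff without isolated points.
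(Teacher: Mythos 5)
Your overall strategy---separate the finitely many points at infinity by disjoint clopen neighbourhoods, recognise each neighbourhood as the one-point compactification of a clopen non-compact subset of $X \setminus \singleton{x}$, apply $(\star)$, and reassemble via $X \oplus X \cong X$---is exactly the paper's. The one genuine gap is your handling of the leftover piece $K = Y \setminus \bigcup_i U_i$. Your proposed absorption argument for $K \oplus X \cong X$ (``$K$ is clopen in a copy of $X$, split off a clopen copy of $X$, \dots'') only shows that $K \oplus X$ sits as a clopen subspace of $X \oplus X \cong X$; being a clopen subspace of $X$ does not yield a homeomorphism with $X$, and $X \oplus X \cong X$ by itself does not allow you to cancel or absorb arbitrary clopen summands. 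As written this step is not a proof, and you flag it yourself as the weak point.

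The fix is to arrange that there is no leftover piece: replace $U_1$ by $U_1 \cup K$. This set is still clopen, contains $p_1$ and no other $p_i$, and $(U_1 \cup K) \setminus \singleton{p_1}$ is a clopen non-compact subset of $X \setminus \singleton{x}$ (it is not closed in the compact space $Y$, because $p_1$ lies in the remainder of a compactification and hence in the closure of the dense subspace $X \setminus \singleton{x}$, so $\singleton{p_1}$ is not open in $U_1 \cup K$). Thus $(\star)$ gives $U_1 \cup K \cong X$ and $Y = (U_1 \cup K) \oplus U_2 \oplus \dots \oplus U_n \cong X$; this partition of $Y$ into $n$ clopen pieces, one per point at infinity, is precisely what the paper uses (citing Woods for zero-dimensionality of $Y$, the same fact you assert). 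Alternatively, your absorption claim can be rescued from $(\star)$ itself rather than from $X \oplus X \cong X$: the set $X \setminus (K \cup \singleton{x})$ is clopen and non-compact in $X \setminus \singleton{x}$ (here $x$ is non-isolated, since $X \setminus \singleton{x}$ admits a compactification with non-empty remainder), and its one-point compactification is $X \setminus K$, so $X \setminus K \cong X$ and $K \oplus X \cong K \oplus (X \setminus K) = X$. A further small slip: you justify the non-compactness of $U_i \setminus \singleton{p_i}$ by appealing to ``$X$ has no isolated points,'' which is not a hypothesis of the lemma; the correct reason is again that $p_i$ lies in the closure of the dense set $X \setminus \singleton{x}$, so $\singleton{p_i}$ is not open and $U_i \setminus \singleton{p_i}$ is not closed in $Y$, hence not compact.
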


\begin{proof}
Let $Z$ be a finite compactification of $X \setminus \singleton{x}$ with remainder consisting of points $\infty_1,\ldots, \infty_n$. By \cite[2.3]{Woods}, every finite compactification of a locally compact zero-dimensional space is zero-dimensional. Hence, there is a partition of $Z$ into $n$ disjoint clopen sets $A_i$ such that $\infty_i \in A_i$. 

The set $A_i \setminus \singleton{\infty_i}$ is a clopen non-compact subspace of $X \setminus \singleton{x}$. Therefore, by property ($\star$) and uniqueness of the one-point compactification, it follows that $A_i$ is homeomorphic to $X$. This proves, after applying $X \oplus X \cong X$ iteratively, that $Z$ is homeomorphic to $X$.  
\end{proof}

This lemma lies at the heart of our proofs for Theorems  \ref{ClassificationCompactifications} and \ref{ClassificationCompactificationsSkappa}. Let us see that it applies to both $\wstar$ under CH, and to $S_\kappa$ assuming $\kappa = \kappa^{<\kappa}$.

\begin{mylem}[{\cite[3.4]{comppaper}}]
\label{lemma123}
\textnormal{[CH].} The space $\wstar$ has property $(\star)$, i.e.\ for every $x$ the one-point compactification of a clopen non-compact subset of $\wstar \setminus \singleton{x}$ is homeomorphic to $\wstar$.
\end{mylem}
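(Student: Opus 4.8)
The plan is to reduce the statement to Parovi\v{c}enko's theorem (Theorem~\ref{parovtheorem}): since we are assuming CH, it suffices to show that the one-point compactification $\alpha U$ of a given clopen non-compact subset $U$ of $\wstar \setminus \singleton{x}$ is a Parovi\v{c}enko space. The first step is to identify $\alpha U$ concretely inside $\wstar$. Since $\wstar \setminus \singleton{x}$ is open in $\wstar$, the set $U$ is open in $\wstar$; it is not closed in $\wstar$, because a closed subset of the compact space $\wstar$ would be compact. Hence $\closureIn{U}{\wstar} = U \cup \singleton{x}$, so $\closureIn{U}{\wstar}$ is a compactification of the locally compact space $U$ with one-point remainder and is therefore homeomorphic to $\alpha U$. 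From now on I would work with the closed subspace $Y := \closureIn{U}{\wstar}$ of $\wstar$.

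Next I would dispatch the ``elementary'' Parovi\v{c}enko axioms for $Y$. It is compact Hausdorff, being closed in $\wstar$, and zero-dimensional, being a subspace of the zero-dimensional space $\wstar$ (restrict a clopen base). As $U$ is non-compact it is infinite, so $Y$ is an infinite closed subspace of the compact $F$-space $\wstar$ and hence contains a copy of $\beta\w$ by fact~(3); combined with $w(Y) \le w(\wstar) = \cont$ (restrict a base of $\wstar$ to $Y$), this gives $w(Y) = \cont$. It has no isolated points: a point of $U$ isolated in $Y$ would be isolated in the open subspace $U$ of $\wstar$, hence in $\wstar$, while $x$ is not isolated in $Y$ since $x \in \closureIn{U}{\wstar}$. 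Finally $Y$ is a closed subspace of the compact (hence normal) $F$-space $\wstar$, so $Y$ is an $F$-space by fact~(2).

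The remaining axiom --- that every non-empty $G_\delta$ of $Y$ has non-empty interior --- carries the real content. Since $Y \setminus U = \singleton{x}$, a non-empty $G_\delta$ subset $G$ of $Y$ either meets $U$ or equals $\singleton{x}$. If $G$ meets $U$, write $G = \bigcap_n V_n$ with each $V_n$ open in $Y$; then each $V_n \cap U$ is open in $\wstar$, so $G \cap U = \bigcap_n(V_n \cap U)$ is a non-empty $G_\delta$ of $\wstar$ contained in $U$, and the Parovi\v{c}enko property of $\wstar$ (non-empty $G_\delta$-sets have non-empty interior) supplies a non-empty open subset of $\wstar$ inside $G \cap U$, which is then open in $Y$ and contained in $G$. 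So it only remains to exclude that $\singleton{x}$ is a $G_\delta$ of $Y$, equivalently that $U$ is $\sigma$-compact. I would argue this by contradiction: if $U = \bigcup_n K_n$ with each $K_n$ compact, then using zero-dimensionality of $\wstar$ (cover each $K_n$ by finitely many clopen subsets of $U$ and take running unions) and passing to a subsequence, one may assume the $K_n$ are clopen in $\wstar$ and strictly increasing with union $U$. Choosing $y_n \in K_{n+1} \setminus K_n$ yields a sequence of distinct points all of whose cluster points lie in $\closureIn{U}{\wstar} \setminus U = \singleton{x}$: a cluster point in some $K_m$ would force the clopen set $K_m$ to contain infinitely many $y_n$, which is impossible since $y_n \notin K_n \supseteq K_m$ for $n \ge m$. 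A sequence in a compact Hausdorff space with a unique cluster point converges to it, so $y_n \to x$; this contradicts fact~(3), that the compact $F$-space $\wstar$ has no non-trivial convergent sequences. Hence $\singleton{x}$ is not a $G_\delta$ of $Y$, so $Y$ is a Parovi\v{c}enko space, and Theorem~\ref{parovtheorem} gives $Y \cong \wstar$.

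The main obstacle is precisely this last point --- showing that a non-compact clopen subset of a card of $\wstar$ is never $\sigma$-compact. Without it the argument genuinely breaks down, since a $\sigma$-compact $U$ would make $x$ a $G_\delta$-point of $\alpha U$, so $\alpha U$ would possess a $G_\delta$-point and could not be homeomorphic to $\wstar$ at all. Everything else is a routine transfer of the Parovi\v{c}enko axioms from $\wstar$ to the closed subspace $Y = \closureIn{U}{\wstar}$.
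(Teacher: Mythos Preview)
Your proof is correct and follows essentially the same approach as the paper: identify the one-point compactification with the closed subspace $U\cup\{x\}$ of $\wstar$, verify the Parovi\v{c}enko axioms there, and rule out $\{x\}$ being a $G_\delta$ by producing a non-trivial convergent sequence in $\wstar$, contradicting fact~(3). The only cosmetic difference is that the paper obtains the convergent sequence by invoking that a $G_\delta$-point in a compact Hausdorff space has countable character, whereas you build it explicitly from a clopen exhaustion of a $\sigma$-compact $U$.
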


\begin{proof}
Let $A$ be a clopen non-compact subset of $\wstar \setminus \singleton{x}$. Taking $A \cup \singleton{x}$, a closed subset of $\wstar$, as representative of its one-point compactification, we use fact $(2)$ from Section \ref{section3} to see that it is a zero-dimensional compact $F$-space of weight $\cont$ without isolated points. 

Suppose that $U \subset A \cup \singleton{x}$ is a non-empty $G_\delta$-set. If $U$ has empty intersection with $A$, then the singleton $U=\singleton{x}$ is a $G_\delta$-point, and hence has countable character in the compact Hausdorff space $A \cup \singleton{x}$ \cite[3.3.4]{Eng}. It follows that there is a non-trivial sequence in $\wstar$ converging to $x$, contradicting fact $(3)$ from Section \ref{section3}. Thus, $U$ intersects the open set $A$ and their intersection is a non-empty $G_\delta$-set of $\wstar$ with non-empty interior.

Applying Parovi\v{c}enko's Theorem \ref{parovtheorem} completes the proof.
\end{proof}

For a proof of the result in case of the space $S_\kappa$, we refer the reader to our earlier paper \cite{comppaper}.

\begin{mylem}[{\cite[4.4]{comppaper}}]
\label{lemma123b}
Assume $\kappa = \kappa^{<\kappa}$. The space $S_\kappa$ has property $(\star)$, i.e.\ for every $x$ the one-point compactification of a clopen non-compact subset of $S_\kappa \setminus \singleton{x}$ is homeomorphic to $S_\kappa$. \qed
\end{mylem}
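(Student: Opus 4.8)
The final statement is Lemma~\ref{lemma123b}, which asserts that under $\kappa = \kappa^{<\kappa}$ the space $S_\kappa$ has property $(\star)$: for every point $x$, the one-point compactification of any clopen non-compact subset of $S_\kappa \setminus \singleton{x}$ is homeomorphic to $S_\kappa$. My plan is to mimic the proof of Lemma~\ref{lemma123}, replacing Parovi\v{c}enko's characterisation of $\wstar$ (Theorem~\ref{parovtheorem}) by Negrepontis and Dow's characterisation of $S_\kappa$ (Theorem~\ref{negre}), and the various $\w_1$-level facts about $F$-spaces and $P$-points by their $\kappa$-level analogues stated as $(1')$, $(4')$ and fact $(3)$ (for compact $F_\kappa$-spaces, suitably generalised).

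Concretely, let $A$ be a clopen non-compact subset of $S_\kappa \setminus \singleton{x}$. Since $A$ is clopen and misses $x$, the set $A \cup \singleton{x}$ is a closed subspace of $S_\kappa$, and it serves as a representative of the one-point compactification $\alpha A$, with $x$ playing the role of the point at infinity. First I would check that $A \cup \singleton{x}$ is a compact zero-dimensional space of the correct weight $\kappa^{<\kappa}$ without isolated points: zero-dimensionality and compactness are inherited from $S_\kappa$; the weight is inherited because $A \cup \singleton{x}$ is an infinite closed subspace of a space of weight $\kappa^{<\kappa}$ and, being a clopen neighbourhood situation, has weight at least $\kappa^{<\kappa}$ as well (alternatively one argues the weight cannot drop using that $A$ is non-compact and $S_\kappa$ is homogeneous-like in its local structure); and there are no isolated points because $S_\kappa$ has none and an isolated point of $A \cup \singleton{x}$ other than $x$ would be isolated in $S_\kappa$, while $x$ is not isolated in $A \cup \singleton{x}$ since $A$ accumulates to $x$ (as $A$ is non-compact, its closure in $S_\kappa$ properly contains $A$, so $x$ is a limit of $A$). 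The $F_\kappa$-space property should be inherited by closed subspaces of compact $F_\kappa$-spaces; this is the analogue of fact~$(2)$, and I would cite the corresponding statement from \cite{Ultrafilters} (it follows from $(1')$ together with the fact that a clopen-type decomposition of an open set restricts to a closed subspace).

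The crux, exactly as in Lemma~\ref{lemma123}, is to verify the remaining $\kappa$-Parovi\v{c}enko axiom: every non-empty intersection of fewer than $\kappa$ many open subsets of $A \cup \singleton{x}$ has non-empty interior. So let $U$ be such an intersection. If $U \cap A = \emptyset$ then $U = \singleton{x}$, so $\singleton{x}$ is an intersection of fewer than $\kappa$ open sets of the compact space $A \cup \singleton{x}$; this forces $x$ to have character $< \kappa$, hence a nested neighbourhood base of a short type, and by the analogue of fact~$(3)$ for compact $F_\kappa$-spaces this is impossible in an infinite compact $F_\kappa$-space (such spaces contain no small-character non-isolated points — the $\kappa$-version of "no convergent sequences"), giving a contradiction. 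Hence $U \cap A \neq \emptyset$; but $U \cap A$ is then a non-empty intersection of fewer than $\kappa$ open subsets of the open set $A \subseteq S_\kappa$, so by the $\kappa$-Parovi\v{c}enko property of $S_\kappa$ it has non-empty interior in $S_\kappa$, and this interior lies inside $A \cup \singleton{x}$ and inside $U$. Thus $A \cup \singleton{x}$ is a $\kappa$-Parovi\v{c}enko space, and Theorem~\ref{negre} identifies it with $S_\kappa$.

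The main obstacle I anticipate is not the topological skeleton — that transfers cleanly — but pinning down the precise $\kappa$-analogue of fact~$(3)$ and confirming it is available in \cite{Ultrafilters}: one needs that an infinite compact $F_\kappa$-space has no non-isolated point of character less than $\kappa$ (equivalently, no "$<\kappa$-sequence" converging to a point). For $\kappa = \w_1$ this is the statement that compact $F$-spaces contain no convergent sequences, proved via the fact that infinite closed subspaces contain a copy of $\beta\w$; the general version presumably goes through closed subspaces containing a copy of a space like $U(\kappa)$ or $\beta(\kappa)$, or is derived directly from $(1')$ by separating a putative short convergent net from its tail. A secondary point to be careful about is the weight computation for $A \cup \singleton{x}$, ensuring it is exactly $\kappa^{<\kappa}$ and not smaller; this should follow from the fact that $A$, being clopen and non-compact in $S_\kappa$, must itself have weight $\kappa^{<\kappa}$, but it deserves an explicit word. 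Since all the relevant statements are recorded in \cite{comppaper} as Lemma~4.4, I would ultimately defer the routine verifications there, exactly as the excerpt does.
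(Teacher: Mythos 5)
Your proposal is correct and follows exactly the route the paper intends: the paper itself gives no proof of Lemma~\ref{lemma123b}, deferring to \cite[4.4]{comppaper}, and your argument is the faithful transfer of the proof of Lemma~\ref{lemma123}, with Theorem~\ref{negre} replacing Theorem~\ref{parovtheorem}. The one genuinely new ingredient you flag --- that an infinite compact $F_\kappa$-space has no non-isolated point of character less than $\kappa$ --- does go through as you suggest: a nested strictly decreasing clopen base $(W_\alpha)_{\alpha<\lambda}$ at such a point, $\lambda<\kappa$ regular, yields the disjoint open sets $\bigcup_{\alpha\text{ even}}(W_\alpha\setminus W_{\alpha+1})$ and $\bigcup_{\alpha\text{ odd}}(W_\alpha\setminus W_{\alpha+1})$ of type less than $\kappa$ whose closures both contain the point, contradicting $(1')$.
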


Thus, we have verified that Lemma \ref{theorem1999} does apply to our spaces $\wstar$ and $S_\kappa$. Surprisingly, despite its strong assumptions, the lemma itself applies to a variety of interesting spaces. 

Spaces which only have $\lambda$ different homeomorphism types amongst their open subspaces (for some cardinal $\lambda$) are said to be of \emph{diversity} $\lambda$ \cite{diversity}. One checks that Lemma \ref{theorem1999} applies to all compact Hausdorff spaces of diversity two, which are known to be zero-dimensional \cite{zerodiv}. In particular, Lemma \ref{theorem1999} applies to the Cantor space $C$, which can be characterised as the unique compact metrizable space of diversity two \cite{Cantor}. It also applies to the Alexandroff Double Arrow space $D$ and to the product $D \times C$. Incidentally, these examples are also non-reconstructible \cite[2.5]{recpaper}.

In a compact Hausdorff space $X$ of diversity two, any subspace $X \setminus \singleton{x}$ is homeomorphic to $X \setminus \Set{x_1,\ldots,x_n}$ and therefore has arbitrarily large finite compactifications. This is when Lemma \ref{theorem1999} is most valuable. Our next lemma shows that not much is needed for this scenario to occur. The proof is a simple induction. 
  
\begin{mylem}
\label{lemm2}
Let $X$ be a topological space such that for all $x$, all finite compactifications of $X \setminus \singleton{x}$ are homeomorphic to $X$. If all spaces $X \setminus \singleton{x}$ have two-point compactifications, they have arbitrarily large finite compactifications. \qed
\end{mylem}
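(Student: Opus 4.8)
The plan is to fix a point $x$ of $X$ and prove by induction on $n \geq 2$ that the card $X \setminus \singleton{x}$ admits an $n$-point compactification (throughout, ``compactification'' means a dense embedding into a compact Hausdorff space, and ``$n$-point'' that the remainder has exactly $n$ points). Since the conclusion ``arbitrarily large finite compactifications'' means precisely that such compactifications exist for every $n$, this suffices, and the base case $n = 2$ is nothing but the second hypothesis of the lemma.

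For the inductive step, suppose $Z$ is an $n$-point compactification of $X \setminus \singleton{x}$, with remainder $R = Z \setminus (X \setminus \singleton{x})$, so $\cardinality{R} = n$. Since $Z$ is a \emph{finite} compactification of the card $X \setminus \singleton{x}$, the first hypothesis supplies a homeomorphism $g \colon Z \to X$. Pick any $\infty \in R$; then $g$ restricts to a homeomorphism of $Z \setminus \singleton{\infty}$ onto $X \setminus \singleton{g(\infty)}$, so by the second hypothesis (applied to the point $g(\infty) \in X$) the space $Z \setminus \singleton{\infty}$ has a two-point compactification $W$, with two new points $q_1 \neq q_2$ at infinity. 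I would then check that $W$ is in fact an $(n+1)$-point compactification of $X \setminus \singleton{x}$: it is compact Hausdorff by construction; its remainder over $X \setminus \singleton{x}$ is $\p{R \setminus \singleton{\infty}} \union \singleton{q_1, q_2}$, of size $(n-1) + 2 = n+1$ (the $q_i$ lie outside $Z \setminus \singleton{\infty} \supseteq R \setminus \singleton{\infty}$); and $X \setminus \singleton{x}$ is dense in $W$. Only the last point is not purely formal, and it drops out of the subspace-closure identity: the closure of $X \setminus \singleton{x}$ inside $Z \setminus \singleton{\infty}$ is $\closureIn{X \setminus \singleton{x}}{Z} \intersect \p{Z \setminus \singleton{\infty}} = Z \setminus \singleton{\infty}$, so $X \setminus \singleton{x}$ is dense in $Z \setminus \singleton{\infty}$, which is dense in $W$; density is transitive. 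This closes the induction.

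I do not anticipate a genuine obstacle: the argument is pure bookkeeping, exactly as the paper promises (``the proof is a simple induction''). The only place one might slip is this density chase, since at each stage one enlarges a compactification of $Z \setminus \singleton{\infty}$ rather than of $X \setminus \singleton{x}$ itself and so must confirm that passing back to the smaller dense subspace $X \setminus \singleton{x}$ does no harm --- which the computation above settles, and, as an aside, this works whether or not the removed point $\infty$ was isolated in $Z$ (it cannot be, since $X \setminus \singleton{x}$ is dense in $Z$, but that is not needed).
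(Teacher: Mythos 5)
Your proof is correct and is precisely the ``simple induction'' the paper intends: identify an $n$-point compactification $Z$ of $X \setminus \singleton{x}$ with $X$ via the first hypothesis, delete one remainder point, two-point-compactify the resulting card via the second hypothesis, and verify (as you do with the closure computation) that $X \setminus \singleton{x}$ remains dense with remainder of size $n+1$. No gaps; nothing to add.
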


The following example of the Cantor cube $2^\kappa$ for uncountable $\kappa$ shows that the assumptions in Lemma \ref{lemm2} cannot be considerably weakened. Since $\beta (2^\kappa \setminus \singleton{x})=2^\kappa$ \cite[Thm.\ 2]{Glicksberg}, these spaces have a unique compactification. The cube $2^\kappa$ is a zero-dimensional compact Hausdorff space with $2^\kappa \cong 2^\kappa  \oplus 2^\kappa$. For property $(\star)$, let $A \subset 2^\kappa \setminus \singleton{x}$ be a clopen non-compact subset. Since $2^\kappa \setminus \singleton{x}$ does not have a 2-point compactification, $A \cup \singleton{x}$ must be clopen in $2^\kappa$. But every clopen set of $2^\kappa$ can be written as a disjoint union of finitely many product-basic open sets, which are homeomorphic to $2^\kappa$. Hence $A \cup \singleton{x} \cong 2^\kappa$. We conclude that Lemma \ref{theorem1999} applies, but restricts to the obvious assertion that the one-point compactification of $2^\kappa \setminus \singleton{x}$ is homeomorphic to $2^\kappa$.

Now finally, with these lemmas established, it is not difficult to give proofs of the main theorems in this section.

\begin{proof}[Proof of Theorem \ref{ClassificationCompactifications}]
Assertion $(b)$ is an immediate consequence of Lemmas \ref{theorem1999} and \ref{lemma123}. For $(a)$, note that Corollary \ref{nonStone} implies that every card of $\wstar$ has a two-point compactification, which is homeomorphic to $\wstar$ by $(b)$. Therefore, $(a)$ now follows from Lemma \ref{lemm2}.

For assertion $(c)$, suppose there is a finite compactification $Z$ of $\wstar \setminus \singleton{x}$ containing two non-$P$-points $\infty_1$ and $\infty_2$ at infinity. Then there are disjoint open $F_\sigma$-sets $F_1$ and $F_2$ in $Z$ with $F_i \subset \wstar \setminus \singleton{x}$ containing $\infty_1$ and $\infty_2$ in their respective boundaries. However, in $\wstar$ the disjoint non-compact open $F_\sigma$-sets $F_1$ and $F_2$ both limit onto $x$. This contradicts the $F$-space property of $\wstar$.
\end{proof} 

In ZFC, the above argument still shows that every finite compactification of $\wstar \setminus \singleton{x}$ is a Parovi\v{c}enko space of weight $\cont$ such that at most one point at infinity is not a $P$-point. However, one cannot decide in ZFC alone whether there are finite compactifications of $\wstar \setminus \singleton{x}$ other than the one-point compactification \cite{douwenkunenmill}.

\begin{proof}[Proof of Theorem \ref{ClassificationCompactificationsSkappa}]
Assertions $(a)$ and $(b)$ follow as in the previous proof.

The proof of $(c)$ uses the same idea as in the case of $\wstar$. Suppose there is a finite compactification $Z$ of $S_\kappa \setminus \singleton{x}$ containing two non-$P_\kappa$-points $\infty_1$ and $\infty_2$ at infinity. Then there are disjoint open subsets $F_1$ and $F_2$ in $Z$ of type less than $\kappa$ with $F_i \subset S_\kappa \setminus \singleton{x}$ that contain $\infty_1$ and $\infty_2$ in their respective boundaries. However, in $S_\kappa$ the disjoint non-compact open sets $F_1$ and $F_2$ of type less than $\kappa$ both  limit onto $x$, contradicting the $F_\kappa$-space property.
\end{proof}

\section{\texorpdfstring{Non-normality of cards of $\wstar$ and $S_\kappa$}{Non-normality of cards}}
\label{sectionnormality}

This section contains proofs that under CH, every card of $\wstar$ is non-normal, and that for uncountable $\kappa$ with $\kappa = \kappa^{<\kappa}$, every card of $S_\kappa$ is non-normal. 

In case of $\wstar$, this result is originally due to Rajagopalan and Warren. Proofs can be found in \cite{easy, raj,Warren}. An account of \cite{raj} is contained in \cite[7.2-7.4]{Walker}. In the first part of this section, we advertise a different, very elegant proof of this classical theorem. The proof builds on ideas from \cite{logunov, sapirowskii, terasawa}, and is mentioned, without details, in \cite[Rmk.\ 3]{terasawa}. The result that also cards of $S_\kappa$ are non-normal is new. Our proof generalises the approach of \cite{Warren} for $\wstar$.

To begin, we note that normality of cards of the spaces $\beta \w$ and $\wstar$ are in fact equivalent problems, in the sense that when deleting a point of the remainder of $\beta \w$, the card of $\beta \w$ is normal if and only if the corresponding card of $\wstar$ is normal. This result is folklore, but as no proof is available in the standard literature we give one in the next lemma. 
 
\begin{mylem}
\label{equivalentnormality}
Let $x \in \wstar$. Then $\beta \w \setminus \singleton{x}$ is normal if and only if $\wstar \setminus \singleton{x}$ is normal.
\end{mylem}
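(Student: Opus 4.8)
The plan is to exploit the fact that $\w$ is an open, discrete, $\sigma$-compact (indeed countable) dense subset of $\beta\w$ whose complement is exactly $\wstar$, and to move normality back and forth across the decomposition $\beta\w \setminus \singleton{x} = \w \disjointSum \p{\wstar \setminus \singleton{x}}$. Write $Y = \beta\w \setminus \singleton{x}$ and $Z = \wstar \setminus \singleton{x}$, so $Z$ is closed in $Y$ and $Y \setminus Z = \w$ is open and discrete.

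For the direction ``$Y$ normal $\Rightarrow$ $Z$ normal'': this is the easy half, since $Z$ is a \emph{closed} subspace of $Y$ and normality is inherited by closed subspaces. So nothing more than the observation that $\wstar \setminus \singleton{x}$ is closed in $\beta\w \setminus \singleton{x}$ is needed here.

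For the converse ``$Z$ normal $\Rightarrow$ $Y$ normal'': suppose $C_0, C_1$ are disjoint closed subsets of $Y$. First I would separate the traces on the closed subspace $Z$: by normality of $Z$ there are disjoint open (in $Z$) sets $U_0, U_1$ with $C_i \cap Z \subseteq U_i$. Since $Z$ is closed in $Y$, each point of $\w = Y \setminus Z$ is isolated in $Y$, so I can enlarge to genuinely disjoint open subsets of $Y$: set $V_i = \tilde U_i \cup (C_i \cap \w)$, where $\tilde U_i$ is an open subset of $Y$ with $\tilde U_i \cap Z = U_i$, shrunk if necessary so that $\tilde U_0 \cap \tilde U_1 = \emptyset$ (possible because $\overline{U_0}^Z \cap U_1 = \emptyset = U_0 \cap \overline{U_1}^Z$ and $Z$ is compact-ish enough — more precisely, because $\w$ is open we may simply intersect the $\tilde U_i$ with complements of the relevant closed sets). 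The points of $C_i \cap \w$ are isolated and finitely or countably many, and one handles them by throwing each into $V_i$ and deleting it from $V_{1-i}$; since these points are isolated in $Y$, this does not disturb openness. The one genuine subtlety is ensuring the closures of $V_0$ and $V_1$ in $Y$ stay disjoint: a point of $\overline{V_0}^Y \cap \overline{V_1}^Y$ would have to lie in $Z$ (isolated points of $\w$ cannot be limit points), hence in $\overline{U_0}^Z \cap \overline{U_1}^Z$, which is empty by the choice of the $U_i$ from normality of $Z$ — provided I chose $U_0, U_1$ with disjoint \emph{closures} in $Z$, which I may by applying normality to $C_0 \cap Z$ and $C_1 \cap Z$ and shrinking once more, or by invoking that in a normal space disjoint closed sets have open neighbourhoods with disjoint closures. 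That closes the argument.

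The main obstacle, and the only place requiring care, is the bookkeeping around the isolated points of $\w$: one must check that adjoining and deleting isolated points to the separating open sets preserves both disjointness and the disjointness of closures, and that a limit point of either $V_i$ automatically lies in $Z$. Everything else is just the standard ``closed subspace of a normal space is normal'' plus the fact that $\w$ is an open discrete subset of $\beta\w$ missing $x$. I would present the forward direction in one line and spend the bulk of the proof on the converse, organised as: (i) reduce to separating traces on $Z$; (ii) choose $Z$-open sets with disjoint $Z$-closures; (iii) extend to $Y$-open sets; (iv) absorb the isolated points; (v) verify disjoint closures in $Y$ by the ``limit points land in $Z$'' observation.
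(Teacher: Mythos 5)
Your forward direction is exactly the paper's: $\wstar\setminus\singleton{x}$ is closed in $\beta\w\setminus\singleton{x}$ and normality is closed-hereditary. Your converse also follows the paper's architecture: separate the traces on $\wstar\setminus\singleton{x}$, extend to open sets of $\beta\w\setminus\singleton{x}$, then absorb the points of $C_i\cap\w$, which is harmless because every subset of $\w$ is open in $\beta\w$ and each $C_i$ is closed. The bookkeeping in your steps (iii)--(v) is essentially fine (and your concern about disjoint \emph{closures} of the final $V_i$ is superfluous --- plain disjointness is all normality asks for).

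The genuine gap sits in the parenthetical ``shrunk if necessary so that $\tilde U_0\cap\tilde U_1=\emptyset$ \dots\ we may simply intersect the $\tilde U_i$ with complements of the relevant closed sets.'' That disjointification is the entire content of the converse, and it cannot follow from the only facts you invoke ($Z$ closed and normal, $\w$ open and discrete and dense). The Isbell--Mr\'owka space $\Psi=\w\cup\mathcal{A}$ over a maximal almost disjoint family is a counterexample to the general scheme: $\mathcal{A}$ is a closed \emph{discrete} (hence normal, with any two disjoint closed subsets already having disjoint closures) subspace whose complement $\w$ is open, discrete and dense, yet $\Psi$ is not normal --- for suitable $C_0,C_1\subseteq\mathcal{A}$ no open extensions of $U_0=C_0$ and $U_1=C_1$ can be made disjoint. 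Concretely, neither reading of your shrinking step works: intersecting $\tilde U_0$ with the complement of $\overline{\tilde U_1}$ may destroy the inclusion $C_0\cap\wstar\subseteq\tilde U_0$ (points of $U_0$ can lie in $\overline{\tilde U_1\cap\w}$), while intersecting with the complement of the non-closed set $\tilde U_1$ destroys openness. So something particular to $\beta\w$ has to enter here. In the paper this is the assertion that the two open extensions $V_1,V_2$ of $W_1,W_2$ meet only in a \emph{finite}, hence clopen, subset of $\w$, so that $V_1\setminus V_2$ and $V_2\setminus V_1$ remain open; to repair your argument you must isolate and actually prove a statement of this type --- for instance by building the $\tilde U_i$ from basic clopen sets $\overline{E}$ with $E\subseteq\w$ and $E^*\subseteq U_i$, noting that $E^*\cap F^*=\emptyset$ forces $E\cap F$ to be finite, and then redistributing the leftover subset of $\w$ between the two sides. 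As written, the converse direction is not established.
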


\begin{proof}
If $\beta \w \setminus \singleton{x}$ is a normal space then its closed subspace $\wstar \setminus \singleton{x}$ inherits normality. 

For the converse implication, assume that $\wstar \setminus \singleton{x}$ is normal and let $C_1$ and $C_2$ be disjoint closed subsets of $\beta \w \setminus \singleton{x}$. Let $C'_i = C_i \cap \wstar$ be the part of each closed set lying in the remainder of $\wstar$. Since $\wstar \setminus \singleton{x}$ is normal, there are open subsets $W_1$ and $W_2$ of $\wstar \setminus \singleton{x}$ with disjoint closures such that $C'_i \subset W_i$. Note that all open sets $V_1$ and $V_2$ in $\beta \w \setminus \singleton{x}$ with $W_i = V_i \cap \wstar$ can only intersect in a finite subset of $\w \subset \beta \w$. Hence $U_1=V_1 \setminus V_2$ and $U_2=V_2 \setminus V_1$ are disjoint open subsets of $\beta \w \setminus \singleton{x}$ containing $C'_1$ and $C'_2$ respectively. 

But now, as any subset of $\w$ is open in $\beta \w$, the sets 
$$ (U_1 \setminus C_2) \union (C_1\cap \w) \; \text{ and } \; (U_2 \setminus C_1) \union (C_2\cap \w)$$
are disjoint open subsets of $\beta \w \setminus \singleton{x}$ containing $C_1$ and $C_2$ respectively. This completes the proof that $\beta \w \setminus \singleton{x}$ is normal.
\end{proof}

\begin{mythm}
\label{nonnormalcards}
\textnormal{[CH].} Every card $\wstar \setminus \singleton{x}$ of $\wstar$ is non-normal.
\end{mythm}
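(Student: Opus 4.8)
The plan is to combine the Butterfly Lemma (Theorem \ref{butterfly}) with the $F$-space property of $\wstar$ to produce, inside the card $\wstar \setminus \singleton{x}$, a pair of disjoint closed sets that cannot be separated. By Theorem \ref{butterfly} we may write $\wstar \setminus \singleton{x} = A \oplus B$ where $A$ and $B$ are open, non-compact, and $\closure{A} \cap \closure{B} = \singleton{x}$ in $\wstar$. Both $A$ and $B$ are clopen in $\wstar \setminus \singleton{x}$, hence themselves zero-dimensional, locally compact, non-compact. The idea is to pick a countable discrete family witnessing non-compactness inside each wing — concretely, a countably infinite collection of pairwise disjoint non-empty clopen subsets $\Set{U_n : n \in \w}$ of $A$ whose union $U = \Union_n U_n$ has $x$ in its closure (such a family exists: $A$ is a non-compact Parovi\v{c}enko-like piece, in fact by Lemma \ref{lemma123} its one-point compactification $A \cup \singleton{x}$ is homeomorphic to $\wstar$, so $x$ is a limit of such clopen "towers"), and symmetrically a family $\Set{V_n : n \in \w}$ inside $B$ with union $V$ limiting onto $x$. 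Then $U$ and $V$ are disjoint open $F_\sigma$-subsets of $\wstar$ both having $x$ in their closures.

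Next I would transport the classical $\wstar$-non-normality argument into the card. The sets $U$ and $V$ are cozero in $\wstar$, hence $\Cstar$-embedded by the $F$-space property; inside $A$ I then split the index set: choose disjoint closed (in $A$) subsets $P, Q \subseteq U$ with $P = \closure{\Union_{n \in E} U_n} \cap U$-type constructions for a suitably "spread out" partition $\w = E \sqcup E'$, arranged so that $x \in \closure{P} \cap \closure{Q}$ in $\wstar$ but $P \cap Q = \emptyset$. Setting $C_1 = (\closure{P} \setminus \singleton{x})$ and $C_2 = (\closure{Q} \setminus \singleton{x}) \cup (\text{a closed piece of } B \text{ limiting on } x)$, both are closed in $\wstar \setminus \singleton{x}$ and disjoint. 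A separation of $C_1$ and $C_2$ by disjoint open sets $W_1, W_2$ of the card would yield, after taking closures in the compact space $\wstar$, sets $\closure{W_1}, \closure{W_2}$ that can only meet in $\singleton{x}$; but since $U$ (being a cozero-set of the $F$-space $\wstar$, split into $P$ and $Q$) forces $\closure{P} \cap \closure{Q} \ni x$ with $x \in \closure{W_1} \cap \closure{W_2}$ unavoidable — one checks the two traces on $U$ cannot be separated away from $x$ — we reach a contradiction. Alternatively, and more cleanly, I would invoke Lemma \ref{equivalentnormality} and run the argument in $\beta\w \setminus \singleton{x}$ instead: there the $U_n, V_n$ can be taken to be singletons from $\w$, the two "towers" are just two sequences of integers accumulating only at $x$, and the failure of normality is the standard fact that in $\beta\w$ two such sequences whose closures meet exactly at $x$ give non-separable closed sets once $x$ is removed — this reduces the whole theorem to the $F$-space property (fact (1) of Section \ref{section3}) together with the Butterfly Lemma guaranteeing the geometric configuration.

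The main obstacle is the construction of the two towers $\Set{U_n}$ and $\Set{V_n}$ with the precise limiting behaviour: I need that $x$ lies in the closure of each $U = \Union U_n$ and $V = \Union V_n$, that $U$ and $V$ are disjoint (automatic from $U \subseteq A$, $V \subseteq B$), and crucially that within $U$ one can further extract two sub-unions with disjoint closures both still grabbing $x$ — this last "splitting at $x$" is exactly where the Butterfly Lemma's conclusion $\closure{A} \cap \closure{B} = \singleton{x}$ and the $F$-space property interact, and getting the bookkeeping right so that the putative separating open sets of the card are forced to collide at $x$ is the delicate point. Under CH this is where Parovi\v{c}enko's theorem (via Lemma \ref{lemma123}, giving $A \cup \singleton{x} \cong \wstar$) does the heavy lifting, since it guarantees the needed clopen towers converging to $x$ exist inside each wing. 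Once the configuration is in place, the contradiction with normality is a short closure-chasing argument in the compact space $\wstar$.
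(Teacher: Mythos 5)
Your proposed configuration cannot exist, and the obstruction is precisely the $F$-space property you intend to exploit. The towers $U = \Union_{n\in\w} U_n \subseteq A$ and $V = \Union_{n\in\w} V_n \subseteq B$ are countable unions of clopen sets, hence disjoint open $F_\sigma$-subsets of $\wstar$; by fact $(1)$ of Section \ref{section3} they have \emph{disjoint closures}, so they cannot both have $x$ in their closure. (This impossibility is exactly what the paper uses to prove part (c) of Theorem \ref{ClassificationCompactifications}.) The same objection kills the internal split of $U$ into $P$ and $Q$: two disjoint sub-unions of a clopen tower are again disjoint open $F_\sigma$-sets, so at most one of their closures can contain $x$ --- and if $x$ happens to be a $P$-point, fact $(4)$ says $x$ is not in the boundary of \emph{any} open $F_\sigma$-set, so not even a single countable tower limiting onto $x$ exists. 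The ``cleaner'' $\beta\w$ version fails for the same reason: two disjoint infinite subsets of $\w$ have disjoint closures in $\beta\w$, and the closure of an infinite subset of $\w$ is a copy of $\beta\w$ accumulating at $2^\cont$ many points, never ``only at $x$''. The butterfly wings $A$ and $B$ escape all of this only because they are open but \emph{not} $F_\sigma$; replacing them by countable clopen towers destroys exactly the feature that makes them useful.

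A direct construction of non-separated closed sets in the card is possible, but it is necessarily an uncountable construction of the Warren type --- compare the proof of Theorem \ref{nonnormalskappa}, which uses a nested tower of length $\kappa$ of clopen neighbourhoods of a $P_\kappa$-point, a second layer of towers inside the successive differences, and a regularity/diagonalisation argument at limit stages; under CH the same scheme with $\kappa=\w_1$ handles $\wstar$. The paper's actual proof of Theorem \ref{nonnormalcards} avoids explicit closed sets entirely: by Lemma \ref{equivalentnormality} it suffices to show $\beta\w\setminus\singleton{x}$ is non-normal; if it were normal, Tietze's theorem would make the closed subspace $\wstar\setminus\singleton{x}$ $\Cstar$-embedded in $\beta\w\setminus\singleton{x}$, hence in $\beta\w$, hence in $\wstar$ --- contradicting Corollary \ref{nonStone}, which deduces from the Butterfly Lemma that the card has a two-point compactification and therefore cannot be $\Cstar$-embedded. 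You already have both ingredients (Lemma \ref{equivalentnormality} and Theorem \ref{butterfly}) on the table; the missing step is to convert ``has a two-point compactification'' into ``not $\Cstar$-embedded'' and to route the normality hypothesis through Tietze's extension theorem rather than through a hand-built pair of closed sets.
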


\begin{proof}
Let $x \in \wstar$. By Lemma \ref{equivalentnormality} it suffices to show that $\beta \w \setminus \singleton{x}$ is non-normal. 
Assume for a contradiction that $\beta \w \setminus \singleton{x}$ is normal. By Tietze's Theorem \cite[2.1.8]{Eng}, the closed subset $\wstar \setminus \singleton{x}$ is $\Cstar$-embedded in $\beta \w \setminus \singleton{x}$, which in turn is $\Cstar$-embedded in $\beta \w$ \cite[3.6.9]{Eng}.
 
Thus, $\wstar \setminus \singleton{x}$ is $\Cstar$-embedded in $\wstar$, contradicting Corollary \ref{nonStone}. 
\end{proof}

We now come to the result that cards of $S_\kappa$ are non-normal. We will see that Lemma \ref{lemma123b} implies that without loss of generality we may focus our attention on cards obtained by deleting a $P_\kappa$-point of $S_\kappa$. From there on, we adapt Warren's result \cite[I.1]{Warren} that under CH, cards of $\wstar$ obtained by deleting $P$-points are non-normal.

\begin{mythm}
\label{nonnormalskappa}
Assume $\kappa = \kappa^{<\kappa}$. If $\kappa$ is uncountable then every card $S_\kappa \setminus \singleton{x}$ of $S_\kappa$ is non-normal.
\end{mythm}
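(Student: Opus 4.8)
The plan is the one flagged in the text: first reduce to deleting a $P_\kappa$-point, then adapt Warren's argument.

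For the reduction, let $x \in S_\kappa$. By the Butterfly Lemma for $S_\kappa$ (Theorem~\ref{butterfly2}) write $S_\kappa \setminus \singleton{x} = A \disjointSum B$ with $A$, $B$ non-compact (this follows since $x$ lies in both closures), open in $S_\kappa$, and $\closureIn{A}{S_\kappa} \intersect \closureIn{B}{S_\kappa} = \singleton{x}$; in particular $A$ and $B$ are clopen in $S_\kappa \setminus \singleton{x}$. Their one-point compactifications $A \union \singleton{x}$ and $B \union \singleton{x}$ are homeomorphic to $S_\kappa$ by Lemma~\ref{lemma123b}, and their disjoint sum is a two-point compactification of $S_\kappa \setminus \singleton{x}$. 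By Theorem~\ref{ClassificationCompactificationsSkappa}(c) at least one of its two points at infinity is a $P_\kappa$-point; after possibly swapping $A$ and $B$, the point $x$ is a $P_\kappa$-point of the clopen subspace $A \union \singleton{x} \cong S_\kappa$ (being a $P_\kappa$-point is inherited from and into clopen subspaces), so $A \cong S_\kappa \setminus \singleton{p}$ for some $P_\kappa$-point $p$ of $S_\kappa$. As $A$ is a closed subspace of $S_\kappa \setminus \singleton{x}$ and normality passes to closed subspaces, it suffices to prove that $S_\kappa \setminus \singleton{p}$ is non-normal for every $P_\kappa$-point $p$.

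For the core I would use that $\kappa = \kappa^{<\kappa}$ forces $\kappa$ regular (König), and that a $P_\kappa$-point $p$ has character $\kappa$ with a strictly decreasing nested clopen neighbourhood base $\Set{V_\alpha : \alpha < \kappa}$, $\bigcap_{\alpha < \kappa} V_\alpha = \singleton{p}$ (fact~$(4')$). Since $p$ is a $P_\kappa$-point and fewer-than-$\kappa$ intersections of open sets in the $\kappa$-Parovi\v{c}enko space $S_\kappa$ have non-empty interior, one can arrange $V_\lambda \subseteq \interior{\bigcap_{\alpha<\lambda} V_\alpha}$ at limits $\lambda$, so that the ``limit gaps'' do not accumulate at $p$ while the clopen ``annuli'' $W_\alpha = V_\alpha \setminus V_{\alpha+1}$ — each homeomorphic to $S_\kappa$ by Lemma~\ref{lemma123b}, hence without isolated points — lie eventually inside every neighbourhood of $p$. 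Adapting Warren's $P$-point construction \cite[I.1]{Warren}, with $\kappa = \kappa^{<\kappa}$ supplying the bookkeeping for a transfinite construction of length $\kappa$, I would build disjoint subsets $C_0, C_1 \subseteq S_\kappa \setminus \singleton{p}$, each closed in $S_\kappa \setminus \singleton{p}$ and meeting every $W_\alpha$, hence each accumulating at $p$ (so $\closureIn{C_i}{S_\kappa} = C_i \union \singleton{p}$), which cannot be separated by disjoint open sets; since that is exactly a failure of normality, the theorem follows together with the reduction. The verification of non-separability should combine the $F_\kappa$-space property of $S_\kappa$ (disjoint open sets of type less than $\kappa$ have disjoint closures, fact~$(1')$), applied inside the individual annuli after shrinking a hypothetical separation to clopen pieces there, with a pressing-down argument on the regular cardinal $\kappa$ that exploits the way the traces of $C_0$ and $C_1$ on the $W_\alpha$ are interleaved as $\alpha \to \kappa$. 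Uncountability of $\kappa$ is essential at this point: a cozero set has type $\leq \w < \kappa$, so the $F_\kappa$-space property has genuine content, whereas for $S_\w \cong C$ it is vacuous and the punctured copies are metrizable, hence normal.

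The hard part is this construction together with its non-separability proof. The structural obstacle is that, $p$ being a $P_\kappa$-point, $p$ lies in the boundary of no open set of type less than $\kappa$, so any subset of $S_\kappa$ accumulating at $p$ — in particular $C_0$ and $C_1$ — must have type $\kappa$, and the $F_\kappa$-space property cannot be applied to them wholesale. Non-separability therefore has to be established annulus by annulus and then globalised using the regularity of $\kappa$, with the construction arranging the local pieces of $C_0$ and $C_1$ so that this globalisation succeeds; this is the heart of Warren's original argument, and making it survive the passage from $\w_1 = \w_1^{<\w_1}$ to an arbitrary $\kappa = \kappa^{<\kappa}$ is the main new content.
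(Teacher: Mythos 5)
Your reduction to the case of a deleted $P_\kappa$-point is correct and is exactly the paper's: butterfly wings $A$, $B$, Theorem~\ref{ClassificationCompactificationsSkappa}(c) to make $x$ a $P_\kappa$-point of $A \union \singleton{x} \cong S_\kappa$, and heredity of normality to closed subspaces. The problem is everything after that. You announce that you ``would build'' disjoint closed sets $C_0, C_1 \subseteq S_\kappa \setminus \singleton{p}$ that cannot be separated, and you yourself flag this construction and its non-separability proof as ``the hard part'' and ``the main new content'' --- but you never carry it out. Since the entire theorem reduces to exhibiting such a pair, what you have written is a (correct) strategy statement, not a proof. A referee would have to supply the core argument themselves.

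For comparison, here is what the paper actually constructs, so you can see the shape of the missing piece. Fix a strictly decreasing clopen base $\set{U_\alpha}:{\alpha<\kappa}$ at $p$, pick a $P_\kappa$-point $p_\alpha$ in each annulus $V_\alpha = U_\alpha \setminus U_{\alpha+1}$ together with a nested clopen base $\set{V_{\alpha,\beta}}:{\beta<\kappa}$, and take
$A = \closure{\set{p_\alpha}:{\alpha<\kappa}}$ and $B = \Union_{\lambda<\kappa} B_\lambda$ where
$B_\lambda = \closure{\Union_{\alpha<\lambda}(V_\alpha\setminus V_{\alpha,\lambda})} \intersect \bigcap_{\alpha<\lambda} U_\alpha$ for limit $\lambda$. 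Note the asymmetry: $A$ meets every annulus, while $B$ is disjoint from \emph{all} annuli and lives entirely at limit levels --- this is what makes disjointness (via the $F_\kappa$-property applied to $\Union_{\alpha<\lambda} V_{\alpha,\lambda}$ versus $\Union_{\alpha<\lambda}(V_\alpha\setminus V_{\alpha,\lambda})$, both of type $<\kappa$) and closedness of $B$ tractable; your proposal to have both sets meet every $W_\alpha$ would make these verifications harder, not easier. Non-separability then uses regularity of $\kappa$: given open $U \supseteq A$, choose $\beta_\alpha$ with $V_{\alpha,\beta_\alpha}\subset U$, iterate countably many times to reach a supremum $\gamma<\kappa$, and show that $W = \Union_n (V_{\alpha_n,\beta_{\alpha_n}}\setminus V_{\alpha_n,\gamma}) \subset U$ has closure meeting $B_\gamma$ by a compactness argument, forcing $U$ to meet any open set containing $B$. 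None of this is routine bookkeeping, and your appeal to a ``pressing-down argument'' does not substitute for it. The proof is incomplete until this construction and its three verifications (disjointness, closedness of $B$, non-separability) are written out.
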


\begin{proof}
It suffices to prove the theorem for cards that have been obtained by deleting a $P_\kappa$-point of $S_\kappa$. To see this, we use that by Theorem \ref{butterfly2}, every point $x \in S_\kappa$ is a butterfly point of $S_\kappa$ with wings $A$ and $B$. Using Theorem \ref{ClassificationCompactificationsSkappa}(c), we may assume that $x$ is a $P_\kappa$-point with respect to $A \cup \singleton{x}$, which in turn is homeomorphic to $S_\kappa$ by Lemma \ref{lemma123b}. Thus, if $S_\kappa$ removed a $P_\kappa$-point is non-normal, then $S_\kappa \setminus \singleton{x}$ contains the closed, non-normal subspace $A$, and hence is itself non-normal. 

So let $p$ be a $P_\kappa$-point in $S_\kappa$. We show that $S_\kappa \setminus \singleton{p}$ is non-normal. Fix a strictly decreasing neighbourhood base $\set{U_\alpha}:{\alpha < \kappa}$ of $p$ consisting of clopen sets. Pick $P_\kappa$-points $p_\alpha$ inside the non-empty sets $V_\alpha = U_\alpha \setminus U_{\alpha + 1}$, which is possible as $V_\alpha$ is homeomorphic to $S_\kappa$. Again, for each $p_\alpha \in V_\alpha$ we fix a nested neighbourhood base $\set{V_{\alpha,\beta}}:{\beta < \kappa}$ of clopen sets, such that $V_{\alpha,0}=V_\alpha$.  

We now describe two closed disjoint sets $A$ and $B$ of $S_\kappa \setminus \singleton{p}$ that cannot be separated by open sets, showing that this space is non-normal. Define, for each limit ordinal $\lambda<\kappa$, the sets
$$B_\lambda = \closure{\Union_{\alpha < \lambda} (V_\alpha \setminus V_{\alpha,\lambda})} \cap \bigcap_{\alpha < \lambda}U_\alpha.$$ 
The closures are of course taken in $S_\kappa \setminus \singleton{p}$. 
Then put 
$$A=\closure{\set{p_\alpha}:{\alpha < \kappa}} \quad \text{and} \quad B = \Union_{\lambda < \kappa} B_\lambda.$$

Let us see why $A$ and $B$ are disjoint. Consider the disjoint sets $\Union_{\alpha < \lambda} V_{\alpha,\lambda}$ and $\Union_{\alpha < \lambda} V_\alpha \setminus V_{\alpha,\lambda}$, the first of which being a superset of $\set{p_\alpha}:{\alpha < \lambda}$. Both sets are of $S_\kappa$-type less than $\kappa$ and hence have disjoint closures in $S_\kappa$ by fact $(1')$ from Section \ref{section3}. It follows that $A$ and $B$ are disjoint, since if $q \in A \cap B$ then $q \notin U_\lambda$ for some limit ordinal $\lambda$, and we obtain a contradiction from 
$$q \in A \cap B \cap (S_\kappa \setminus U_\lambda) \subset \closure{\Union_{\alpha < \lambda} V_{\alpha,\lambda}}  \cap \closure{ \Union_{\alpha < \lambda} V_\alpha \setminus V_{\alpha,\lambda}} = \emptyset.$$

We now show that $B$ is closed. Suppose that $q$ lies in $\closure{B}$. Since $V_\alpha \cap B = \emptyset$ for all $\alpha < \kappa$ it follows that $q \in \bigcap_{\alpha < \mu} U_\alpha \setminus U_\mu$ for some limit ordinal $\mu < \kappa$. Note that $$B=\Union_{\lambda < \mu} B_\lambda \cup B_\mu \cup \Union_{\mu < \lambda < \kappa} B_\lambda.$$
Since the last factor is a subset of the closed set $U_\mu$, the point $q$ is contained in the closure of the first two factors. However, it follows from the construction that $\closure{\Union_{\lambda < \mu} B_\lambda} \cap \bigcap_{\alpha < \mu} U_\alpha \setminus U_\mu \subset B_\mu$. So $q$ lies in $\closure{B_\mu}=  B_\mu$. Hence $q \in B$, and we have shown that $B$ is closed.  

To complete the proof it remains to show that $A$ and $B$ cannot be separated by open sets. So let $U$ and $V$ be open sets of $S_\kappa \setminus \singleton{p}$ containing $A$ and $B$ respectively. For every ordinal $\alpha<\kappa$ there exists $\beta_\alpha > \alpha$ such that $V_{\alpha,{\beta_\alpha}} \subset  U$. Since $\kappa=\kappa^{<\kappa}$ implies that $\kappa$ is regular \cite[1.27]{Ultrafilters}, the increasing sequence defined by $\alpha_0=0$ and $\alpha_n=\beta_{\alpha_{n-1}}$ has a supremum $\gamma < \kappa$. Consider the set
$$W = \Union_{n \in \w} (V_{\alpha_n,\beta_{\alpha_n}} \setminus V_{\alpha_n,\gamma}).$$
It follows from our construction that 
$$W \subset U \quad \text{and} \quad W \subset \Union_{n \in \w} (V_{\alpha_n} \setminus V_{\alpha_n,\gamma}).$$
Let us see that $\closure{W} \cap \bigcap_{\alpha < \gamma}U_\alpha$ is a non-empty subset of $B_\gamma$. For this, we only have to show that $\closure{W}$ intersects $\bigcap_{\alpha < \gamma}U_\alpha =\bigcap_{n \in \w}U_{\alpha_n}$. But this holds, since otherwise the collection $\set{ S_\kappa \setminus U_{\alpha_n}}:{n \in \w}$ forms an open cover of the compact set $\closure{W}$, yielding a contradiction.

It follows that $V$ is a neighbourhood of every point in $\closure{W} \cap \bigcap_{\alpha < \gamma}U_\alpha$, and therefore that $V\cap W \neq \emptyset$. Since 
 $V \cap U \supseteq V \cap W$ we see that $U$ and $V$ cannot be disjoint, completing the proof.
\end{proof}

\section{\texorpdfstring{Reconstruction results for $\wstar$ and $S_\kappa$}{Reconstruction of w* and Skappa}}
\label{section4}

This section contains our reconstruction results for the spaces $\wstar$ and $S_\kappa$. We will see that it is independent of ZFC whether the space $\wstar$ is reconstructible. For example, $\wstar$ is reconstructible in models where $\wstar$ is the Stone-\v{C}ech compactification of one of its cards, and is not reconstructible in models where the Continuum Hypothesis holds. 

Generalising the behaviour of $\wstar$ under CH, we show further below that assuming $\kappa = \kappa^{<\kappa}$, the spaces $S_\kappa$ are always non-reconstructible.


\begin{mythm}
\label{mainresultsrec}
\textnormal{[CH].} The space $\wstar$ is non-reconstructible. For a $P$-point $p$, the space $\wstar \setminus \singleton{p}$ is a non-homeomorphic reconstruction of $\wstar$.
\end{mythm}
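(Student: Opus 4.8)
The plan is to establish the two ingredients of non-reconstructibility: that $\wstar$ and $Y:=\wstar\setminus\singleton{p}$ are not homeomorphic, and that $\singletonDeletion{\wstar}=\singletonDeletion{Y}$ (note that $Y$ is well-defined up to homeomorphism by fact $(5)$ of Section \ref{section3}). Non-homeomorphy is immediate: $\wstar$ has no isolated points, so $Y$ is a non-closed, hence non-compact, subspace of $\wstar$ and therefore cannot be homeomorphic to the compact space $\wstar$; alternatively, $Y$ is non-normal by Theorem \ref{nonnormalcards} while $\wstar$ is normal. Throughout the deck computation I will use that a card of $\wstar$ is an open, hence locally compact, subspace, and conversely that a locally compact Hausdorff space is homeomorphic to some card $\wstar\setminus\singleton{x}$ exactly when its one-point compactification is homeomorphic to $\wstar$.

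To see $\singletonDeletion{Y}\subseteq\singletonDeletion{\wstar}$: a card of $Y$ is a space $\wstar\setminus\Set{p,q}$ with $q\neq p$, whose one-point compactification is the quotient $W:=\wstar/(p\sim q)$ collapsing $\Set{p,q}$ to a single point $\infty$. The plan is to verify that $W$ is again a Parovi\v{c}enko space; then Theorem \ref{parovtheorem} (this is the one place CH is needed) yields $W\cong\wstar$, so $\wstar\setminus\Set{p,q}$ is a card of $\wstar$. Compactness, absence of isolated points, weight $\cont$ and the property that nonempty $G_\delta$-sets have nonempty interior all pass routinely from $\wstar$ to the finite quotient $W$; zero-dimensionality requires a small check, combining clopen neighbourhoods of $p$ and of $q$ in $\wstar$ into a clopen base of $W$ at $\infty$. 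I expect the $F$-space property to be the main obstacle: using characterisation $(1)$ of Section \ref{section3}, if two disjoint open $F_\sigma$-sets of $W$ both had $\infty$ in their closures, then pulling back along the closed quotient map would produce two disjoint open $F_\sigma$-sets of $\wstar$ each having $p$ on its boundary — impossible, since the $P$-point $p$ lies on the boundary of no open $F_\sigma$-set.

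For the reverse inclusion $\singletonDeletion{\wstar}\subseteq\singletonDeletion{Y}$, take an arbitrary card $\wstar\setminus\singleton{x}$ of $\wstar$. By Theorem \ref{ClassificationCompactifications} it has a two-point compactification $Z$, the space $Z$ is homeomorphic to $\wstar$, and at least one of the two points $\infty_1,\infty_2$ of the remainder — say $\infty_1$ — is a $P$-point of $Z$. Identifying $Z$ with $\wstar$ and using fact $(5)$ of Section \ref{section3} to find an autohomeomorphism carrying $\infty_1$ to $p$, we get $\wstar\setminus\singleton{x}\cong\wstar\setminus\Set{p,q}$ for some $q$, exhibiting it as a card of $Y$. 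Combining the two inclusions gives $\singletonDeletion{\wstar}=\singletonDeletion{Y}$; hence $Y$ is a reconstruction of $\wstar$ that is not homeomorphic to it, so $\wstar$ is non-reconstructible.
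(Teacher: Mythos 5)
Your proposal is correct and follows essentially the same route as the paper: the inclusion $\singletonDeletion{\wstar\setminus\singleton{p}}\subseteq\singletonDeletion{\wstar}$ is obtained by verifying that the quotient of $\wstar$ collapsing $\Set{p,q}$ is a Parovi\v{c}enko space and invoking Theorem \ref{parovtheorem}, and the reverse inclusion by Theorem \ref{ClassificationCompactifications} together with the transitivity of the autohomeomorphism group on $P$-points. One small slip in the $F$-space check: $\infty\in\closure{U}\cap\closure{V}$ only forces each of $\closure{\pi^{-1}(U)}$ and $\closure{\pi^{-1}(V)}$ to meet $\Set{p,q}$, not to have $p$ on its boundary; the correct finish is that the $P$-point $p$ can lie in neither closure, so both would have to contain $q$, contradicting the $F$-space property of $\wstar$ --- which is exactly the content of the paper's Lemma \ref{quotient3}.
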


For the proof we need three lemmas describing the behaviour of quotients of Parovi\v{c}enko spaces $X$ when identifying a subset $A$ with a single point. Write $X/A$ for the quotient space induced by the partition $\Set{A} \union \set{\singleton{x}}:{x \in X \setminus A}$.

\begin{mylem} 
\label{quotient1}
Let $X$ be a compact Hausdorff space and $A \subset X$ a closed, non-open subset of $X$. Then $X / A$ is a one-point compactification of $X \setminus A$. Moreover, if $X$ is zero-dimensional, then so is $X/A$.
\end{mylem}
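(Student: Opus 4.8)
The plan is to verify directly that the quotient map $q\colon X\to X/A$ is a closed map, so that $X/A$ is compact Hausdorff, and then to identify the point $[A]$ as the unique point at infinity. First I would recall that the quotient topology on $X/A$ declares $V\subseteq X/A$ open iff $q^{-1}(V)$ is open in $X$; equivalently, $q$ is a continuous surjection and we want it to be closed. Since $X$ is compact, $X/A$ is certainly compact as a continuous image, so the real content is the Hausdorff property and the fact that $X\setminus A$ embeds homeomorphically as a dense open subspace.

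The key steps, in order: (1) Show $q$ is a closed map. Given $C\subseteq X$ closed, we need $q(C)$ closed, i.e. $q^{-1}(q(C))$ closed in $X$. If $C\cap A=\emptyset$ then $q^{-1}(q(C))=C$, which is closed. If $C\cap A\neq\emptyset$ then $q^{-1}(q(C))=C\cup A$, a union of two closed sets, hence closed. So $q$ is closed. (2) Deduce $X/A$ is Hausdorff: a closed surjection from a compact Hausdorff space onto a $T_1$ space has Hausdorff image — and $X/A$ is $T_1$ because $A$ is closed (so $\{[A]\}$ is closed) and singletons $\{[x]\}$ for $x\notin A$ pull back to $\{x\}\cup(\text{nothing})$, which are closed since $X$ is $T_1$. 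Alternatively, separate $[A]$ from $[x]$ directly using compactness of $A$ and normality of $X$ to find disjoint open $U\supseteq A$, $W\ni x$ with $W\cap A=\emptyset$; then $q(U)$, $q(W)$ work since $q^{-1}q(U)=U$, $q^{-1}q(W)=W$. Separating two points $[x],[y]$ outside $[A]$ is immediate from a similar saturation argument. (3) Check $q\restriction_{X\setminus A}$ is an open embedding onto $(X/A)\setminus\{[A]\}$: it is a continuous bijection onto that set, and it is open because for $V\subseteq X\setminus A$ open in $X$, $V$ is also open in $X$ (as $X\setminus A$ is open, $A$ being closed), $q^{-1}(q(V))=V$ is open, so $q(V)$ is open. (4) Density: $X\setminus A$ is dense in $X$ because $A$ is non-open (its complement is not closed, i.e. $\overline{X\setminus A}=X$); since $q$ is continuous and surjective, $q(X\setminus A)=(X/A)\setminus\{[A]\}$ is dense in $X/A$. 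Combining (1)--(4), $X/A$ is a compact Hausdorff space containing $X\setminus A$ as a dense open subspace with one-point remainder $\{[A]\}$, i.e. it is \emph{the} one-point compactification of the (necessarily locally compact Hausdorff) space $X\setminus A$. For the zero-dimensional addendum: assuming $X$ is zero-dimensional, a neighbourhood base at $[A]$ is given by $q(U)$ where $U$ ranges over clopen sets of $X$ containing $A$ (these exist and are cofinal in the neighbourhood filter of the compact set $A$ by zero-dimensionality plus compactness); each such $q(U)$ is clopen in $X/A$ since both $q^{-1}q(U)=U$ and $q^{-1}q(X/A\setminus q(U))=X\setminus U$ are clopen. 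Away from $[A]$, a clopen base is inherited from the open embedding of the zero-dimensional space $X\setminus A$, using that clopen subsets $D$ of $X\setminus A$ with $\overline{D}\cap A=\emptyset$ — equivalently $D$ clopen in $X$ and disjoint from $A$ — map to clopen sets. Hence $X/A$ has a base of clopen sets.

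The main obstacle is nothing deep but rather bookkeeping about which sets are saturated (i.e. of the form $q^{-1}q(\cdot)$): the whole argument hinges on the observation that a set $S\subseteq X$ is saturated precisely when $S\cap A=\emptyset$ or $A\subseteq S$, and that closedness/openness of $q(S)$ then reduces to closedness/openness of $S$ or of $S\cup A$ or of $X\setminus S$ in $X$. Once that dictionary is set up, each of (1)--(4) and the zero-dimensionality claim is a one-line check. The only genuinely necessary hypotheses are that $A$ is closed (for $T_1$-ness of the quotient and to make $X\setminus A$ open) and that $A$ is non-open (for density, so that we really get a \emph{one-point compactification} rather than a disjoint addition of an isolated point); compactness of $X$ supplies compactness of $X/A$ and, together with Hausdorffness, local compactness of $X\setminus A$.
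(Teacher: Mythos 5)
Your proposal is correct and follows essentially the same route as the paper's (much terser) proof: compactness of the quotient, Hausdorffness from closedness of $A$ together with regularity/normality of $X$, density of $X\setminus A$ from non-openness of $A$, and a clopen neighbourhood base at the collapsed point obtained by compactness of $A$. One small inaccuracy: non-openness of $A$ does not give $\overline{X\setminus A}=X$ \emph{in $X$} (take $X=[0,1]\cup\{2\}$ and $A=[1/2,1]\cup\{2\}$, which is closed and non-open, yet $X\setminus A=[0,1/2)$ is not dense in $X$); what you actually need, and what does follow directly from $A$ being non-open, is that the singleton $\{[A]\}$ is not open in $X/A$, whence its complement $q(X\setminus A)$ is dense in the quotient.
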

\begin{proof}
First, a quotient space of a compact space is compact. Further, $X/A$ is Hausdorff as $X$ is regular and $A$ is closed.
Since $A$ is not open, the map $X \setminus A \hookrightarrow X/A$, sending $x \mapsto \singleton{x}$ is a dense embedding with a one-point remainder. 

For zero-dimensionality, we show that $A \subset X$ has a neighbourhood base of clopen sets. So let $U$ be an open neighbourhood of $A$. Assuming that $X$ is zero-dimensional, for every $x \in A$ there is a clopen set such that  $x \in C(x) \subseteq U$. The clopen cover $\set{C(x)}:{x \in A}$ of the compact set $A$ has a finite subcover. Its union is a clopen set between $A$ and $U$. 
\end{proof}

The next lemma shows in which cases quotients preserve the $G_\delta$-property of Parovi\v{c}enko spaces. A similar lemma appears without proof in \cite[1.4.2]{Intro}. Note that if $X$ is compact Hausdorff and $A \subset X$ is closed, the quotient map $\pi \colon X \to X/A$ is a continuous map from a compact space to a Hausdorff space, and therefore closed. 

\begin{mylem} 
\label{quotient2}
Suppose $X$ has the property that non-empty $G_\delta$-sets have non-empty interior. Let $A \subset X$ be a closed, nowhere dense subset of $X$. Then $X / A$ also has the property that non-empty $G_\delta$-sets have non-empty interior. 
\end{mylem}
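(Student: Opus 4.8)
The plan is to work directly with the closed quotient map $\pi\colon X\to X/A$ and the fact that, for a closed surjection, the preimage of a $G_\delta$ need not be a $G_\delta$ unless one is careful — so first I would reduce the statement to controlling preimages. Let $G=\bigcap_{n\in\w}W_n$ be a non-empty $G_\delta$-set in $X/A$ with each $W_n$ open. I distinguish two cases according to whether the distinguished point $a_0=\pi(A)$ lies in $G$. If $a_0\notin G$, then $G$ is contained in the open set $(X/A)\setminus\{a_0\}$, which is homeomorphic to the open subspace $X\setminus A$ of $X$ via $\pi$; pulling $G$ back gives a non-empty $G_\delta$-set of $X$ lying inside the open set $X\setminus A$, which by hypothesis on $X$ has non-empty interior (in $X$, hence in $X\setminus A$, hence its $\pi$-image has non-empty interior in $X/A$). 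So the substance is the case $a_0\in G$.

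In the case $a_0\in G$, the key move is to replace each $W_n$ by a smaller open set that is "saturated" near $A$. Since $\pi$ is closed and $W_n\ni a_0$, the set $\pi^{-1}(W_n)$ is an open set containing the closed set $A$; using regularity and compactness of $X$ (exactly as in the proof of Lemma~\ref{quotient1}, or just by normality of the compact Hausdorff space $X$), choose an open set $O_n$ with $A\subseteq O_n\subseteq\closure{O_n}\subseteq\pi^{-1}(W_n)$, and additionally shrink so that $O_{n+1}\subseteq O_n$. Then $O=\bigcap_{n\in\w}O_n$ is a $G_\delta$-set of $X$ containing $A$, and moreover $\pi^{-1}(G)\supseteq O$ while each $\closure{O_n}$ is a saturated-enough closed set. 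The point is that $\bigcap_n O_n$ is a $G_\delta$ of $X$; by hypothesis it has non-empty interior $V\neq\emptyset$ in $X$. If $V\setminus A\neq\emptyset$, then since $A$ is nowhere dense $V\setminus A$ is a non-empty open subset of $V$, hence open in $X$ and disjoint from $A$, so $\pi(V\setminus A)=V\setminus A$ is a non-empty open subset of $X/A$; it remains to check $\pi(V\setminus A)\subseteq G$, which follows because $V\subseteq O\subseteq\pi^{-1}(W_n)$ for every $n$, so $\pi(V)\subseteq\bigcap_n W_n=G$. Thus $G$ has non-empty interior in $X/A$.

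The one remaining worry is whether $V\setminus A$ could be empty, i.e.\ whether the interior $V$ of $\bigcap_n O_n$ could be entirely swallowed by $A$; but $A$ is nowhere dense in $X$, so $A$ contains no non-empty open set, hence $V\not\subseteq A$ and $V\setminus A\neq\emptyset$ automatically. This is exactly where the nowhere-dense hypothesis (rather than merely "closed, non-open") is used, and it is also the only delicate point: without it the interior produced by the $G_\delta$-property of $X$ could collapse onto the point $a_0$ and fail to give interior in the quotient. I expect the main obstacle to be the bookkeeping in the saturation step — making sure the $O_n$ can be chosen decreasing and with $\closure{O_n}\subseteq\pi^{-1}(W_n)$ simultaneously — but this is routine given that $X$ is compact Hausdorff, hence normal, and $\pi$ is closed so each $\pi^{-1}(W_n)$ is genuinely open around $A$.
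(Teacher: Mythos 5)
Your argument is correct, and at its core it is the same as the paper's: pull the $G_\delta$ back along $\pi$, apply the hypothesis on $X$, discard $A$ using nowhere-density, and push the resulting saturated open set forward. The extra machinery, however, is driven by a misconception stated at the outset: for \emph{any} continuous map the preimage of a $G_\delta$ is automatically a $G_\delta$, since $\pi^{-1}\bigl(\bigcap_n W_n\bigr)=\bigcap_n\pi^{-1}(W_n)$ and each $\pi^{-1}(W_n)$ is open (it is images, not preimages, of $G_\delta$-sets that can go wrong). Consequently both the case split on whether $\pi(A)\in G$ and the saturation step (the nested $O_n$ with $A\subseteq O_n\subseteq\overline{O_n}\subseteq\pi^{-1}(W_n)$) are unnecessary: $\pi^{-1}(G)$ is already a non-empty $G_\delta$-set of $X$ because $\pi$ is continuous and surjective, so it has non-empty interior $V$; since $A$ is closed and nowhere dense, $V\setminus A$ is a non-empty open set disjoint from $A$, hence saturated, and $\pi(V\setminus A)$ is a non-empty open subset of $G$. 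That is the paper's entire proof, and it uses neither compactness nor normality of $X$ --- only that $\pi$ is a continuous surjective quotient map whose sole non-trivial fibre is $A$. Your identification of exactly where nowhere-density (rather than mere closedness and non-openness) is needed is correct and matches the paper.
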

\begin{proof}
Let $U$ be a non-empty $G_\delta$ of $X/A$. We have to show that it has non-empty interior. Since $\pi$ is continuous and surjective, $\pi^{-1}(U)$ is a non-empty $G_\delta$-set of $X$. By assumption, it has non-empty interior. Since $A$ is closed and nowhere dense, the set $\pi^{-1}(U) \setminus A$ also has non-empty interior. Observing that $\pi(\interior{\pi^{-1}(U) \setminus A})$ is an open subset of $U$ completes the proof.
\end{proof}

Our last lemma tells us under which conditions collapsing a subset to a single point leaves the $F$-space property intact. The result is a slight generalisation of \cite[1.4.1]{Intro}. 

\begin{mylem}
\label{quotient3}
Let $X$ be a compact $F$-space and $A\subset X$ a closed subset containing at most one non-$P$-point of $X$. Then $X/A$ is an $F$-space.
\end{mylem}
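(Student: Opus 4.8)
The plan is to use the characterisation of the $F$-space property from fact (1) in Section \ref{section3}: a normal space (here a compact Hausdorff space, hence normal) is an $F$-space if and only if disjoint open $F_\sigma$-sets have disjoint closures. So the goal is, given disjoint open $F_\sigma$-sets $G_1, G_2$ of $X/A$, to show $\closure{G_1} \cap \closure{G_2} = \emptyset$. Write $\pi \colon X \to X/A$ for the quotient map, which is continuous, closed, and surjective. Let $a \in A$ be the unique possible non-$P$-point of $X$ lying in $A$ (if $A$ consists entirely of $P$-points, the argument only simplifies).

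First I would pull back: $\pi^{-1}(G_i)$ is an open subset of $X$, and since $\pi$ is a closed continuous surjection and $G_i$ is $F_\sigma$ in $X/A$, the preimage $\pi^{-1}(G_i)$ is an open $F_\sigma$-set of $X$ (a countable union of closed sets pulls back to a countable union of closed sets). The sets $\pi^{-1}(G_1)$ and $\pi^{-1}(G_2)$ are disjoint. Now there are two cases according to whether the point $[A] \in X/A$ lies in $G_1 \cup G_2$ or not. If $[A] \notin G_1 \cup G_2$, then each $\pi^{-1}(G_i)$ misses $A$, so it is literally an open $F_\sigma$-subset of $X$ identified with a subset of $X/A$ away from the collapsed point; the $F$-space property of $X$ gives $\closure{\pi^{-1}(G_1)}^X \cap \closure{\pi^{-1}(G_2)}^X = \emptyset$, and since $\pi$ is closed this forces $\closure{G_1} \cap \closure{G_2} = \emptyset$ after checking that $\pi$ maps the $X$-closures onto the $X/A$-closures (using that $A$ itself meets neither $\pi^{-1}(G_i)$, so no point of the closure gets glued onto $[A]$). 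The delicate case is when, say, $[A] \in G_1$. Then $A \subseteq \pi^{-1}(G_1)$, while $\pi^{-1}(G_2)$ is an open $F_\sigma$-set disjoint from $A$. Here the point $a$ may be a non-$P$-point; but since $a \in \pi^{-1}(G_1)$ and $\pi^{-1}(G_1)$ is open, $a$ is in the \emph{interior} of $\pi^{-1}(G_1)$, so the boundary of $\pi^{-1}(G_1)$ in $X$ consists only of $P$-points of $X$. Now $\pi^{-1}(G_1)$ need not be $C^*$-embedded directly, but I would instead argue: a point $x$ in $\closure{\pi^{-1}(G_1)}^X \setminus \pi^{-1}(G_1)$ is a $P$-point, hence cannot lie on the boundary of the open $F_\sigma$-set $\pi^{-1}(G_2)$ (a $P$-point is not in the boundary of any open $F_\sigma$-set — for $\wstar$ this is fact (4), but for a general compact $F$-space one uses directly that a $P$-point $p$ in $\closure{U}$ for $U$ an open $F_\sigma$ must in fact have $p \in U$, since $U = \bigcup_n F_n$ with $F_n$ closed and $X \setminus \closure{U}$-style separations give a $G_\delta$ neighbourhood structure that a $P$-point sees through). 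Hence $\closure{\pi^{-1}(G_1)}^X$ meets $\closure{\pi^{-1}(G_2)}^X$ only possibly inside $\pi^{-1}(G_1)$; but $\pi^{-1}(G_1)$ is open and disjoint from $\pi^{-1}(G_2)$, so it is disjoint from $\closure{\pi^{-1}(G_2)}^X$. Therefore $\closure{\pi^{-1}(G_1)}^X \cap \closure{\pi^{-1}(G_2)}^X = \emptyset$, and pushing forward through the closed map $\pi$ yields $\closure{G_1} \cap \closure{G_2} = \emptyset$, as the collapsing only affects $[A] \in G_1$ and changes neither closure on the $G_2$ side.

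The main obstacle I expect is the case analysis at the collapsed point together with the bookkeeping that $\pi$, being closed, really does send $X$-closures to $X/A$-closures in the relevant way: one must verify that $\closure{G_i}^{X/A} = \pi\left(\closure{\pi^{-1}(G_i)}^X\right)$ and that the only new incidence that collapsing could create is at $[A]$, which is handled by the fact that at most one non-$P$-point sits in $A$ and it lands in the interior of the one $G_i$ that contains $[A]$. A clean way to organise this is to reduce, before the case split, to the situation where neither $G_i$ contains $[A]$: if $[A] \in G_1$, replace $G_1$ by $G_1 \setminus \{[A]\}$ — but this may destroy openness at $[A]$, so instead one keeps $[A]$ and argues as above that the boundary contribution at $[A]$ is harmless. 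Either way the heart of the matter is the single-non-$P$-point hypothesis: it guarantees that the only potentially bad point of $A$ is swallowed into the interior of whichever $G_i$ it belongs to (and if it belongs to neither $G_i$, then $A$ reduces to a set of $P$-points and the $F$-space property of $X$ transfers verbatim).
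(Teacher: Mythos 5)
Your overall strategy is the paper's: reduce to fact (1) via normality of $X/A$, pull the two disjoint open $F_\sigma$-sets back along the closed quotient map $\pi$, and split into cases according to whether the collapsed point $[A]$ lies in one of them. But you have put the weight of the argument on the wrong case. When $[A]\in G_1$, no $P$-point hypothesis is needed at all: $A\subseteq\pi^{-1}(G_1)$, the $F$-space property of $X$ gives $\closure{\pi^{-1}(G_1)}\cap\closure{\pi^{-1}(G_2)}=\emptyset$, hence $A\cap\closure{\pi^{-1}(G_2)}=\emptyset$, so $[A]\notin\closure{G_2}$ and disjointness passes down since $\pi$ is injective off $A$. (Your intermediate claim that ``the boundary of $\pi^{-1}(G_1)$ in $X$ consists only of $P$-points of $X$'' is false: the hypothesis restricts the non-$P$-points lying in $A$, not those of $X$, and that boundary will in general contain many non-$P$-points of $X$ outside $A$. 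Fortunately the claim is also unnecessary.)

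The genuine gap is in the case $[A]\notin G_1\cup G_2$, which is exactly where the ``at most one non-$P$-point in $A$'' hypothesis must be used. Your justification --- $A$ meets neither $\pi^{-1}(G_i)$, ``so no point of the closure gets glued onto $[A]$'' --- is a non sequitur: an open set disjoint from $A$ can perfectly well have closure meeting $A$ (if $A$ is nowhere dense this is the typical situation, and it is the whole point of the lemma that both closures might a priori pick up the point $[A]$). What one must argue here is: every $P$-point of $A$ lies outside $\closure{\pi^{-1}(G_1)}\cup\closure{\pi^{-1}(G_2)}$, because a $P$-point not belonging to an open $F_\sigma$ (equivalently cozero, by normality) set cannot lie in its boundary; and the single non-$P$-point of $A$ lies in at most one of the two closures, since those closures are disjoint by the $F$-space property of $X$. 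Hence $[A]$ belongs to at most one of $\pi\p{\closure{\pi^{-1}(G_i)}}=\closure{G_i}$, and disjointness again passes to $X/A$. You do state the $P$-point/boundary fact, but you deploy it in the case where it is not needed and omit it in the case where it is; the parenthetical ``$A$ reduces to a set of $P$-points'' in your final paragraph is likewise incorrect, since the non-$P$-point of $A$ does not disappear merely because $[A]$ lies in neither $G_i$.
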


\begin{proof}
By Lemma \ref{quotient1}, the space $X/A$ is normal. To establish the $F$-space property, it therefore suffices, using fact $(1)$ from Section \ref{section3}, to verify that disjoint open $F_\sigma$-sets have disjoint closures.

So let $U$ and $V$ be disjoint open $F_\sigma$-sets of $X/A$. Since $\pi^{-1}(U)$ and $\pi^{-1}(V)$ are disjoint open $F_\sigma$-sets of $X$, they have disjoint closures in $X$.

Suppose that $\Set{A} \in U \cup V$. Without loss of generality, we have $A \subset \pi^{-1}(U)$ and hence $\pi(\closure{\pi^{-1}(U)}) \cap \pi(\closure{\pi^{-1}(V)}) = \emptyset$. Since $\pi$ is a closed surjective map, we have $\closure{U} \subseteq \pi(\closure{\pi^{-1}(U)})$ and hence $\closure{U} \cap \closure{V} = \emptyset$. 

Now suppose that $\Set{A} \notin U \union V$. Then $A$ does not intersect $\pi^{-1}(U) \cup \pi^{-1}(V)$. Note that for all $P$-points $p \in A$ it follows from fact $(4)$ in Section \ref{section3} that 
$$p \notin \closure{\pi^{-1}(U)} \cup \closure{\pi^{-1}(V)}.$$
 Finally, since $\pi^{-1}(U)$ and $\pi^{-1}(V)$ have disjoint closures in $X$, the single non-$P$-point of $A$ cannot be contained in both of them. Thus, we may assume without loss of generality that $\Set{A} \notin \pi(\closure{\pi^{-1}(U)})$. Therefore, $\pi(\closure{\pi^{-1}(U)}) \cap \pi(\closure{\pi^{-1}(V)}) = \emptyset$, implying, as before, that $U$ and $V$ have disjoint closures in $X/A$. 
\end{proof}

\begin{proof}[Proof of Theorem \ref{mainresultsrec}]
We prove that for a $P$-point $p$ of $\wstar$, the space $\wstar \setminus \singleton{p}$ is a non-homeomorphic reconstruction of $\wstar$. Let us first show $\singletonDeletion{\wstar \setminus \singleton{p}} \subseteq \singletonDeletion{\wstar}$. 

For this inclusion, pick any card $\wstar \setminus \Set{p,x}$ in $\singletonDeletion{\wstar \setminus \singleton{p}}$. We claim that its one-point compactification $X=\singleton{\infty} \cup \p{\wstar\setminus \Set{p,x}}$ is a Parovi\v{c}enko space. Then, by Theorem \ref{parovtheorem}, there is a homeomorphism $f \colon X \to \wstar$. It follows 
$$\wstar \setminus \Set{p,x} \cong X \setminus \singleton{\infty} \cong \wstar \setminus \Set{f(\infty)},$$ 
establishing that $\wstar \setminus \Set{p,x} \in \singletonDeletion{\wstar}$.

To see that $X$ is a Parovi\v{c}enko space note that $X$ is a compact space of weight $\cont$ without isolated points. By Lemma \ref{quotient1}, we may take $\wstar/A$ with $A = \Set{p,x}$ as a representative for $X$, showing that $X$ is zero-dimensional. Further, by Lemmas \ref{quotient2} and \ref{quotient3}, the space $\wstar/A$ is an $F$-space with the property that non-empty $G_\delta$-sets have non-empty interior. Thus, $X$ is Parovi\v{c}enko, completing the proof of the first inclusion.

We now establish the reverse inclusion $\singletonDeletion{\wstar \setminus \singleton{p}} \supseteq \singletonDeletion{\wstar}$. For this, let $\wstar \setminus \singleton{x}$ be any card in $\singletonDeletion{\wstar}$. It follows from Theorem \ref{ClassificationCompactifications} that there exist points $\infty_1$ and $\infty_2$ of $\wstar$, of which $\infty_1$ is a $P$-point, such that 
$$\wstar \setminus \singleton{x} \cong \wstar \setminus \Set{\infty_1,\infty_2}.$$
By fact $(5)$ from Section \ref{section3}, there exists a homeomorphism $f$ of $\wstar$ carrying $\infty_1$ to $p$. Then $\wstar \setminus \singleton{x} \cong \wstar \setminus \Set{p,f(\infty_2)}$, and hence $\wstar \setminus \singleton{x}$ is a card in $\singletonDeletion{\wstar \setminus \singleton{p}}$.
\end{proof}

The proof that $S_\kappa$ is non-reconstructible is very similar. The following two lemmas are straightforward adaptions of Lemma \ref{quotient2} and \ref{quotient3} respectively.

\begin{mylem} 
\label{quotient4}
Suppose $X$ has the property that every non-empty intersection of fewer than $\kappa$ many open sets has non-empty interior. Let $A \subset X$ be a closed, nowhere dense subset of $X$. Then $X / A$ also has the property that every non-empty intersection of fewer than $\kappa$ many open sets has non-empty interior. \qed
\end{mylem}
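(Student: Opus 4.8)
The plan is to follow the proof of Lemma \ref{quotient2} almost verbatim, replacing ``non-empty $G_\delta$-set'' by ``non-empty intersection of fewer than $\kappa$ many open sets''. Write $\pi \colon X \to X/A$ for the quotient map. As in Lemma \ref{quotient2} we work with $X$ compact Hausdorff (in the application $X$ will be $S_\kappa$), so $\pi$ is continuous, surjective and closed, and $X/A$ is Hausdorff; in fact only continuity and surjectivity of $\pi$ will be used below.

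First I would take a non-empty set $U = \bigcap_{i \in I} U_i$ of $X/A$ with each $U_i$ open and $\cardinality{I} < \kappa$, and observe that $\pi^{-1}(U) = \bigcap_{i \in I} \pi^{-1}(U_i)$ is a non-empty intersection of fewer than $\kappa$ many open subsets of $X$ (preimages commute with intersections, and $\pi$ continuous and surjective makes each $\pi^{-1}(U_i)$ open and the total preimage non-empty). By the hypothesis on $X$, the set $\pi^{-1}(U)$ has non-empty interior. Since $A$ is closed and nowhere dense, the open set $V := \interior{\pi^{-1}(U)} \setminus A$ is still non-empty, and by construction $V \cap A = \emptyset$.

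Finally I would push $V$ forward: because $\pi$ is injective on $X \setminus A$ and $V$ is disjoint from $A$, we have $\pi^{-1}\of{\pi(V)} = V$, so $\pi(V)$ is open in $X/A$; and $\pi(V) \subseteq \pi\of{\pi^{-1}(U)} = U$ by surjectivity of $\pi$. Hence $\pi(V)$ is a non-empty open subset of $U$, so $U$ has non-empty interior, as required.

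The proof presents no real obstacle. The only point deserving a word of justification, exactly as in Lemma \ref{quotient2}, is that the image under $\pi$ of an open set disjoint from $A$ is again open in the quotient, which is immediate from $\pi$ being a quotient map that is injective off $A$.
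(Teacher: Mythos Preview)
Your proof is correct and is exactly the straightforward adaptation of Lemma \ref{quotient2} that the paper intends; the paper itself omits the proof, merely noting that it is a direct generalisation. Your argument follows the paper's Lemma \ref{quotient2} step for step, with the only addition being a slightly more explicit justification that $\pi(V)$ is open via $\pi^{-1}(\pi(V)) = V$.
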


\begin{mylem}
\label{quotient5}
Let $X$ be a compact $F_\kappa$-space and $A\subset X$ a closed subset containing at most one non-$P_\kappa$-point of $X$. Then $X/A$ is an $F_\kappa$-space. \qed
\end{mylem}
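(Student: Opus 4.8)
The statement is the $F_\kappa$-analogue of Lemma~\ref{quotient3}, and I would prove it by running the proof of that lemma almost verbatim, replacing ``open $F_\sigma$-set'' by ``open set of $X$-type less than $\kappa$'', ``$F$-space'' by ``$F_\kappa$-space'', fact $(1)$ by fact $(1')$, and fact $(4)$ by fact $(4')$. The one new ingredient that needs a line of justification is that the quotient map behaves well with respect to types.

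\textbf{Step 1: set-up.} Since $X$ is a (zero-dimensional, compact) $F_\kappa$-space and $A$ is closed, Lemma~\ref{quotient1} gives that $X/A$ is a compact Hausdorff zero-dimensional space, and being a continuous Hausdorff image of a compact space, the quotient map $\pi\colon X\to X/A$ is closed. As $\pi^{-1}$ carries clopen sets to clopen sets, if $U\subseteq X/A$ has type $\tau<\kappa$, say $U=\bigcup_{i<\tau}C_i$ with each $C_i$ clopen in $X/A$, then $\pi^{-1}(U)=\bigcup_{i<\tau}\pi^{-1}(C_i)$ exhibits $\pi^{-1}(U)$ as an open set of $X$-type at most $\tau<\kappa$. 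By Lemma~\ref{quotient1} the space $X/A$ is normal, so by fact $(1')$ from Section~\ref{section3} it suffices to show that two disjoint open sets $U,V$ of $X/A$ of types less than $\kappa$ have disjoint closures. Fix such $U,V$; then $\pi^{-1}(U)$ and $\pi^{-1}(V)$ are disjoint open sets of $X$ of type less than $\kappa$, hence have disjoint closures in $X$ by the $F_\kappa$-space property of $X$.

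\textbf{Step 2: the two cases.} If $\Set{A}\in U\cup V$, say without loss of generality $A\subseteq\pi^{-1}(U)$, then $A\cap\closure{\pi^{-1}(V)}=\emptyset$, and since $\pi$ is injective off $A$ this yields $\pi\p{\closure{\pi^{-1}(U)}}\cap\pi\p{\closure{\pi^{-1}(V)}}=\emptyset$; as $\pi$ is closed we have $\closure{U}\subseteq\pi\p{\closure{\pi^{-1}(U)}}$ and likewise for $V$, so $\closure{U}\cap\closure{V}=\emptyset$. If instead $\Set{A}\notin U\cup V$, then $A$ is disjoint from $\pi^{-1}(U)\cup\pi^{-1}(V)$; by fact $(4')$ every $P_\kappa$-point $p\in A$ fails to lie in the boundary of any open set of type less than $\kappa$, so $p\notin\closure{\pi^{-1}(U)}\cup\closure{\pi^{-1}(V)}$. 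Since $\closure{\pi^{-1}(U)}$ and $\closure{\pi^{-1}(V)}$ are disjoint, the at most one non-$P_\kappa$-point of $A$ lies in at most one of them, so without loss of generality $A\cap\closure{\pi^{-1}(U)}=\emptyset$, i.e.\ $\Set{A}\notin\pi\p{\closure{\pi^{-1}(U)}}$. Hence $\pi\p{\closure{\pi^{-1}(U)}}\cap\pi\p{\closure{\pi^{-1}(V)}}=\emptyset$, and as before $\closure{U}\cap\closure{V}=\emptyset$.

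\textbf{Main obstacle.} There is no real obstacle: the only point requiring care beyond Lemma~\ref{quotient3} is the type-preservation observation in Step~1 (preimages of clopen sets are clopen, and unions are preserved), after which the case analysis is identical. I would present the proof compactly, essentially by pointing to the proof of Lemma~\ref{quotient3} with these substitutions.
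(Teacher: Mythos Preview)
Your proposal is correct and is exactly the approach the paper intends: the paper gives no separate proof but declares the lemma a ``straightforward adaption'' of Lemma~\ref{quotient3}, and you have carried out precisely that adaption, including the one genuinely new line (that $\pi^{-1}$ preserves the bound on type because preimages of clopens are clopen). One small remark: fact~$(4')$ as stated in Section~\ref{section3} is formulated for $S_\kappa$ under $\kappa=\kappa^{<\kappa}$, whereas Lemma~\ref{quotient5} is for an arbitrary compact $F_\kappa$-space; the direction you actually use---a $P_\kappa$-point not lying in an open set of type $<\kappa$ is not in its closure---follows immediately from the definition of $P_\kappa$-point in any zero-dimensional space, so no hypothesis is missing (the paper's proof of Lemma~\ref{quotient3} makes the same shorthand citation of fact~$(4)$).
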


\begin{mythm}
\label{mainresultsrec2}
Assume $\kappa = \kappa^{<\kappa}$. The space $S_\kappa$ is non-reconstructible. Indeed, for a $P_\kappa$-point $p$, the space $S_\kappa \setminus \singleton{p}$ is a non-homeomorphic reconstruction of $S_\kappa$.
\end{mythm}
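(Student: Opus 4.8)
The plan is to follow the proof of Theorem~\ref{mainresultsrec} almost verbatim, substituting the Negrepontis--Dow Theorem~\ref{negre} for Parovi\v{c}enko's Theorem~\ref{parovtheorem}, Lemmas~\ref{quotient4} and~\ref{quotient5} for Lemmas~\ref{quotient2} and~\ref{quotient3}, and facts $(4')$, $(5')$ for $(4)$, $(5)$. Fixing a $P_\kappa$-point $p$ of $S_\kappa$, I would show $\singletonDeletion{S_\kappa \setminus \singleton{p}} = \singletonDeletion{S_\kappa}$; this exhibits $S_\kappa$ and $S_\kappa \setminus \singleton{p}$ as reconstructions of one another, and they are not homeomorphic because $S_\kappa$ is compact while $S_\kappa \setminus \singleton{p}$ is not (the point $p$ is non-isolated, $S_\kappa$ having no isolated points).

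For the inclusion $\singletonDeletion{S_\kappa \setminus \singleton{p}} \subseteq \singletonDeletion{S_\kappa}$, I would pick an arbitrary card $S_\kappa \setminus \Set{p,x}$ and show that its one-point compactification $X$ is a $\kappa$-Parovi\v{c}enko space. Then Theorem~\ref{negre} yields a homeomorphism $f \colon X \to S_\kappa$, whence $S_\kappa \setminus \Set{p,x} \cong X \setminus \singleton{\infty} \cong S_\kappa \setminus \singleton{f(\infty)} \in \singletonDeletion{S_\kappa}$. To see that $X$ is $\kappa$-Parovi\v{c}enko, put $A = \Set{p,x}$, a finite (hence closed) set that is not open and is nowhere dense (as $S_\kappa$ has no isolated points) and that contains at most one non-$P_\kappa$-point, namely $x$. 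By Lemma~\ref{quotient1}, $S_\kappa / A$ is a zero-dimensional one-point compactification of $S_\kappa \setminus A$ and may therefore be taken as a representative of $X$; by Lemma~\ref{quotient4} it retains the property that non-empty intersections of fewer than $\kappa$ open sets have non-empty interior; and by Lemma~\ref{quotient5} it is an $F_\kappa$-space. Since $X$ is moreover compact, without isolated points, and of weight $\kappa^{<\kappa}$, it is a $\kappa$-Parovi\v{c}enko space.

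For the reverse inclusion, I would take an arbitrary card $S_\kappa \setminus \singleton{x}$. By Theorem~\ref{ClassificationCompactificationsSkappa} it has a two-point compactification, so there are points $\infty_1, \infty_2 \in S_\kappa$ with $\infty_1$ a $P_\kappa$-point and $S_\kappa \setminus \singleton{x} \cong S_\kappa \setminus \Set{\infty_1,\infty_2}$. By fact $(5')$ there is an autohomeomorphism $g$ of $S_\kappa$ carrying $\infty_1$ to $p$, so $S_\kappa \setminus \singleton{x} \cong S_\kappa \setminus \Set{p, g(\infty_2)} \in \singletonDeletion{S_\kappa \setminus \singleton{p}}$. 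The two inclusions together finish the argument.

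As in the $\wstar$ case, the only real work is the verification that $X$ is $\kappa$-Parovi\v{c}enko, and its crux is the realisation of $X$ as the quotient $S_\kappa / \Set{p,x}$, which is what makes Lemmas~\ref{quotient1}, \ref{quotient4} and~\ref{quotient5} applicable. The hypothesis of Lemma~\ref{quotient5} that the collapsed set carries at most one non-$P_\kappa$-point is precisely the reason the reconstruction must be built by deleting a $P_\kappa$-point rather than an arbitrary point; everything else is routine once Theorem~\ref{ClassificationCompactificationsSkappa} and facts $(4')$, $(5')$ are in hand.
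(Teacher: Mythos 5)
Your proposal is correct and follows essentially the same route as the paper: both inclusions are established exactly as in the paper's proof, realising the one-point compactification of $S_\kappa \setminus \Set{p,x}$ as the quotient $S_\kappa/\Set{p,x}$ and invoking Lemmas \ref{quotient1}, \ref{quotient4}, \ref{quotient5} together with Theorems \ref{negre} and \ref{ClassificationCompactificationsSkappa} and fact $(5')$. The extra detail you supply on verifying the $\kappa$-Parovi\v{c}enko properties is exactly what the paper leaves implicit.
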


\begin{proof}
To prove the inclusion $\singletonDeletion{S_\kappa \setminus \singleton{p}} \subseteq \singletonDeletion{S_\kappa}$, pick any card $S_\kappa \setminus \Set{p,x}$ in $\singletonDeletion{S_\kappa \setminus \singleton{p}}$. Using Lemmas \ref{quotient4} and \ref{quotient5} we see that its one-point compactification $X=\singleton{\infty} \cup \p{S_\kappa\setminus \Set{p,x}}$ is a $\kappa$-Parovi\v{c}enko space. By Theorem \ref{negre} there is a homeomorphism $f \colon X \to S_\kappa$. It follows 
$$S_\kappa \setminus \Set{p,x} \cong X \setminus \singleton{\infty} \cong S_\kappa \setminus \Set{f(\infty)},$$ 
establishing that $S_\kappa \setminus \Set{p,x} \in \singletonDeletion{S_\kappa}$.

For the reverse inclusion, let $S_\kappa \setminus \singleton{x}$ be a card in $\singletonDeletion{S_\kappa}$. It follows from Theorem \ref{ClassificationCompactificationsSkappa} that there are points $\infty_1$ and $\infty_2$ of $S_\kappa$, with $\infty_1$ being a $P_\kappa$-point, such that 
$$S_\kappa \setminus \singleton{x} \cong S_\kappa \setminus \Set{\infty_1,\infty_2}.$$ 
By fact $(5')$ from Section \ref{section3}, there exists a homeomorphism $f$ of $S_\kappa$ carrying $\infty_1$ to $p$. Then $S_\kappa \setminus \singleton{x} \cong S_\kappa \setminus \Set{p,f(\infty_2)}$, and hence $S_\kappa \setminus \singleton{x}$ is a card in $\singletonDeletion{S_\kappa \setminus \singleton{p}}$.
\end{proof}

We conclude this section with the result that $\wstar$ is consistently reconstructible. In particular, together with the results above we see that the question whether $\wstar$ is reconstructible is independent of the axioms of set theory ZFC. Note also that for showing that $\wstar$ is non-reconstructible, the assumption CH cannot be weakened to Martin's axiom (MA).

\begin{mythm}[{\cite{douwenkunenmill}}]
\label{douwenkunenmilltheorem}
It is consistent with $MA+\cont=\w_2$ that for all $x\in \wstar$ we have $\beta(\wstar \setminus \singleton{x})=\wstar$. \qed
\end{mythm}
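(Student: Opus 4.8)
Because this is a consistency statement and not a theorem of ZFC, the plan is to produce a suitable forcing extension; none of the tools assembled earlier in the paper are available here, since the Butterfly Lemma and all of Section~\ref{sectionfincomp} rest on CH or on $\kappa = \kappa^{<\kappa}$.

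I would first recast the conclusion combinatorially. For any non-isolated $x$ the compact space $\wstar$ is the one-point compactification of $\wstar \setminus \singleton{x}$, so $\beta(\wstar\setminus\singleton{x}) = \wstar$ is equivalent to $\wstar\setminus\singleton{x}$ being $\Cstar$-embedded in $\wstar$, that is, to the assertion that disjoint zero-sets of the card have disjoint closures in $\wstar$; since $x$ is the only point of $\wstar$ outside the card, it suffices to rule out disjoint cozero subsets $W_1,W_2$ of $\wstar\setminus\singleton{x}$ having $x$ as a common limit point. Writing $x$ as an ultrafilter $\mathcal{U}$ and decomposing each $W_i$ into clopen subsets of $\wstar$, such a configuration is a pair $\big(\set{A_\xi}:{\xi<\tau},\ \set{B_\eta}:{\eta<\sigma}\big)$ of cross-almost-disjoint families of subsets of $\w$ with $\mathcal{U} \in \closure{\Union_{\xi}A_\xi^*}\cap\closure{\Union_{\eta}B_\eta^*}$ but $\mathcal{U}$ in neither union. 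Two observations delimit the problem. First, the $F$-space property of $\wstar$ --- a ZFC fact --- already excludes all such configurations with $\tau,\sigma\le\w$, since then $\Union_\xi A_\xi^*$ and $\Union_\eta B_\eta^*$ are genuine cozero-sets of $\wstar$ and hence have disjoint closures. Second --- and this needs checking, but is a routine $\sigma$-centred argument --- Martin's Axiom produces, for any cross-almost-disjoint pair of families of size $\le\w_1$, a set $A\subseteq\w$ almost containing every $A_\xi$ and almost disjoint from every $B_\eta$; then the clopen set $A^*$ separates the closures $\closure{\Union_\xi A_\xi^*}$ and $\closure{\Union_\eta B_\eta^*}$, so no ultrafilter can lie in both. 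Hence under $MA + \continuum = \w_2$ the only configurations that could still witness a failure are those with one family of size $\continuum=\w_2$.

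Next I would build the model along Solovay--Tennenbaum lines: over a model of GCH, run a finite-support ccc iteration of length $\w_2$ whose bookkeeping interleaves the usual tasks forcing Martin's Axiom with ``anti-butterfly'' tasks, in which one is handed a name for a cross-almost-disjoint pair of families and forces the $\sigma$-centred ``separating set'' $A^*$ of the previous paragraph. The separating posets are $\sigma$-centred, so the iteration stays ccc and $V[G_{\w_2}]\models MA + \continuum=\w_2$; moreover each clopen separator, once added, remains a separator in every further ccc extension, since ``$A_\xi\subseteq^* A$'' and ``$A\cap B_\eta$ finite'' are absolute. With these features in place one argues in $V[G_{\w_2}]$ that a putative counterexample $\wstar\setminus\singleton{x}$ yields a witnessing pair of families which, after the reductions above, may be taken of size $\le\w_1$; by the $\w_2$-chain condition this pair already lives in some $V[G_\alpha]$, a later anti-butterfly step has inserted a separator for it, and the separator contradicts $\mathcal{U} = x$ lying in both closures.

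The main obstacle is precisely the ``may be taken of size $\le\w_1$'' reduction that the last paragraph glossed over. A priori a witnessing configuration can have a family of size $\continuum=\w_2$, and there are $2^{\continuum}$ such configurations --- far too many to anticipate in an $\w_2$-length iteration --- so the genuine work is a uniformisation/reflection lemma showing that any obstructing pair can be thinned to one of size $\le\w_1$ while keeping $\mathcal{U}$ in both closures. This is delicate because $\mathcal{U}$ need not have a neighbourhood base of size $<\continuum$, so the thinning cannot simply track a small base of the ultrafilter; it has to proceed one clopen piece at a time, balancing the regularity of $\w_2$ against the structure of $\wstar$ under Martin's Axiom. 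I expect this step to be where the ingenuity of van Douwen, Kunen and van Mill is concentrated and where most of the work lies.
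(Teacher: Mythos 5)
The paper does not prove this statement at all: it is imported verbatim from van Douwen, Kunen and van Mill \cite{douwenkunenmill} (note the \qed\ directly after the statement), so your sketch can only be measured against that original article. Your opening reductions are correct: since $\wstar\setminus\singleton{x}$ is dense in the compact space $\wstar$, the conclusion is equivalent to $\Cstar$-embedding of the card, hence to the non-existence of a pair of disjoint cozero subsets of the card both accumulating at $x$; and the $F$-space property of $\wstar$ does dispose of the configurations of countable type. The Solovay--Tennenbaum shape of the plan is also the right genre for a consistency result with $MA+\continuum=\w_2$. But one of the two load-bearing steps is wrong, and the other is absent.

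The step you call ``a routine $\sigma$-centred argument'' is false. A Hausdorff gap $(a_\xi,b_\xi)_{\xi<\w_1}$, with $a_\xi\subseteq^* a_\eta\subseteq^* b_\eta\subseteq^* b_\xi$ for $\xi<\eta$, yields via $A_\xi=a_\xi$ and $B_\eta=\w\setminus b_\eta$ a cross-almost-disjoint pair of $\w_1$-sized families; a set $A$ with $A_\xi\subseteq^* A$ and $A\cap B_\eta$ finite for all $\xi,\eta$ is exactly an interpolant of the gap, which does not exist. Hausdorff gaps exist in ZFC and (in their condition-(H) form) are indestructible by ccc forcing, so MA does not separate arbitrary cross-almost-disjoint pairs of size $\w_1$, your ``anti-butterfly'' posets are not $\sigma$-centred (nor even ccc) on such inputs, and the iteration as described breaks. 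Worse, since in a compact zero-dimensional space disjoint closures of $\bigcup_\xi A_\xi^*$ and $\bigcup_\eta B_\eta^*$ are equivalent to the existence of such a separator, a Hausdorff gap produces in ZFC a point $x$ lying in the closures of two disjoint open subsets of $\wstar\setminus\singleton{x}$ of type $\w_1$; so the statement your machine is trying to force --- that every cross-almost-disjoint configuration of size $\le\w_1$ gets separated --- is outright refutable. The configurations that must actually be killed form a proper subclass, singled out by the requirement that the two open sets be cozero in the card, and your reformulation discards precisely that structure; showing that witnessing pairs are never Hausdorff-gap-like is part of the real content. The second problem is the one you acknowledge yourself: the reflection of witnesses of size $\continuum$ down to size $\w_1$ is entirely missing, and without it the bookkeeping cannot close. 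As it stands, the proposal is a correct framing of the problem together with a plan whose two essential steps are, respectively, incorrect and unproved; the actual argument must be taken from \cite{douwenkunenmill}.
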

\begin{mythm}
It is consistent with $MA+\cont=\w_2$ that $\wstar$ is reconstructible.
\end{mythm}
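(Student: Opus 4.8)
The plan is to work throughout in a model of set theory as provided by Theorem~\ref{douwenkunenmilltheorem}, so that $MA+\cont=\w_2$ holds together with the extra property that $\beta(\wstar \setminus \singleton{x}) = \wstar$ for every $x \in \wstar$. I would then show that in any such model $\wstar$ is reconstructible, by proving directly that every space $Z$ with $\singletonDeletion{Z}=\singletonDeletion{\wstar}$ is homeomorphic to $\wstar$.

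The first step is to note that in this model each card of $\wstar$ has an essentially unique compactification. Fix $x$ and set $C_x = \wstar \setminus \singleton{x}$; this is a locally compact, non-compact space whose Stone-\v{C}ech compactification is $\wstar$ with single-point remainder $\beta C_x \setminus C_x = \singleton{x}$. For any compactification $K$ of $C_x$ the canonical continuous surjection $\beta C_x \to K$ fixes $C_x$ pointwise and hence carries $\beta C_x \setminus C_x$ onto $K \setminus C_x$; thus $\cardinality{K \setminus C_x}\le 1$, and since $C_x$ is non-compact, $K$ is a one-point compactification of $C_x$ and therefore homeomorphic to $\wstar$ by uniqueness of one-point compactifications \cite[3.5.11]{Eng}. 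So $\wstar$ is, up to homeomorphism, the only compactification of any card of $\wstar$.

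Now take $Z$ with $\singletonDeletion{Z}=\singletonDeletion{\wstar}$. Since being Tychonoff is a reconstructible property for spaces with at least three points \cite[Thm.~3.1]{recpaper}, $Z$ is Tychonoff. Pick any $z \in Z$; as $Z \setminus \singleton{z}$ is a card of $Z$, it is homeomorphic to some card $\wstar \setminus \singleton{x}$ of $\wstar$, and I identify $C := Z \setminus \singleton{z}$ with this card. Next I would exclude the possibility that $z$ is isolated in $Z$: otherwise $Z = C \oplus \singleton{z}$, so deleting a point of $C$ yields a card of $Z$ with an isolated point, whereas no card of $\wstar$ has an isolated point (an isolated point of $\wstar \setminus \singleton{y}$ would, by Hausdorffness, make a singleton open in the crowded space $\wstar$) --- contradicting $\singletonDeletion{Z}=\singletonDeletion{\wstar}$. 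Hence $z$ is not isolated, so $C$ is dense in $Z$ and therefore dense in the compact Hausdorff space $\beta Z$. Thus $\beta Z$ is a compactification of $C$, so the first step gives $\beta Z \cong \wstar$ with $\beta Z \setminus C$ a single point; since $z \in \beta Z \setminus C$ we get $\beta Z \setminus C = \singleton{z}$, whence $Z = C \cup \singleton{z} = \beta Z$. So $Z$ is compact, it is a compactification of $C$ with one-point remainder, and consequently $Z \cong \wstar$.

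The heart of the argument --- and the only step that is not essentially routine --- is the observation that $\beta Z$ is forced to be a compactification of the card $C$: once this is in place, the rigidity supplied by Theorem~\ref{douwenkunenmilltheorem} in the first step pins down $\beta Z$, and with it $Z$, as $\wstar$. The remaining ingredients (the action of the canonical map onto an arbitrary compactification, and the exclusion of an isolated point) are standard.
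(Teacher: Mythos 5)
Your argument is correct and follows the same route as the paper: both reduce the statement to Theorem \ref{douwenkunenmilltheorem} together with the principle that a compact Hausdorff space arising as a non-trivial Stone--\v{C}ech compactification is reconstructible. The paper simply cites this principle from \cite[5.4]{recpaper}, whereas you supply a self-contained proof of the special case needed for $\wstar$; the details you give (uniqueness of compactifications of the locally compact cards, exclusion of an isolated deleted point, and the resulting identification $Z=\beta Z\cong\wstar$) are all sound.
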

\begin{proof}
It is shown in \cite[5.4]{recpaper} that every compact Hausdorff space arising as a non-trivial Stone-\v{C}ech compactification is reconstructible. Hence, the reconstruction result follows from the previous theorem.
\end{proof}

\section{Normality is consistently non-reconstructible}
\label{sectionalltogether}

Theorem \ref{mainresultsrec} established that under CH, the spaces $\wstar$ and $\wstar \setminus \singleton{p}$ for a $P$-point $p$ are non-homeomorphic reconstructions of each other. Since under CH, the space $\wstar \setminus \singleton{p}$ is non-normal by Theorem \ref{nonnormalcards}, this gives the desired result that normality is consistently non-reconstructible. Also, in presence of Hausdorffness, compactness implies paracompactness, which in turn implies normality \cite[5.1.1 \& 5.1.18]{Eng}. Hence, it is also consistent that paracompactness is non-reconstructible.

More generally, we have the following theorem.

\begin{mythm}
\label{thedefinitenormalityresult}
The existence of an uncountable cardinal $\kappa$ with the property $\kappa=\kappa^{<\kappa}$ implies that normality and paracompactness are not reconstructible. 
\end{mythm}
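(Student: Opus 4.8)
The plan is to assemble the pieces already in hand. Fix an uncountable cardinal $\kappa$ with $\kappa = \kappa^{<\kappa}$. By Theorem \ref{negre}, the $\kappa$-Parovi\v{c}enko space $S_\kappa$ exists, and since $\kappa$ is uncountable it contains $P_\kappa$-points by the discussion preceding $(4')$ in Section \ref{section3}. Pick such a $P_\kappa$-point $p$. The two ingredients are then: (i) by Theorem \ref{mainresultsrec2}, the spaces $S_\kappa$ and $S_\kappa \setminus \singleton{p}$ have the same deck, i.e.\ they are non-homeomorphic reconstructions of each other; and (ii) by Theorem \ref{nonnormalskappa}, since $\kappa$ is uncountable, the card $S_\kappa \setminus \singleton{p}$ is non-normal, whereas $S_\kappa$ itself is a compact Hausdorff space and hence normal \cite[5.1.5]{Eng}.

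Putting these together: $S_\kappa$ is normal, $S_\kappa \setminus \singleton{p}$ is a reconstruction of $S_\kappa$, yet $S_\kappa \setminus \singleton{p}$ is not normal. Hence normality is not preserved under reconstruction, so it is not a reconstructible property. For paracompactness, I would note that a compact Hausdorff space is paracompact, so $S_\kappa$ is paracompact; on the other hand a paracompact Hausdorff space is normal \cite[5.1.5]{Eng}, so the non-normal space $S_\kappa \setminus \singleton{p}$ is not paracompact. Thus the same pair of spaces witnesses that paracompactness fails to be reconstructible.

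I would also remark that the case $\kappa = \w_1$, i.e.\ CH, is already covered by the opening paragraph of this section via Theorems \ref{mainresultsrec} and \ref{nonnormalcards} applied to $\wstar$; the present theorem is the uniform statement that subsumes it, replacing $\wstar$ by $S_\kappa$ and $P$-points by $P_\kappa$-points. There is essentially no obstacle here beyond correctly invoking the earlier results: the only point requiring a word of care is that Theorem \ref{nonnormalskappa} needs $\kappa$ uncountable (the hypothesis of the present theorem), since for $\kappa = \w$ the space $S_\w$ is the Cantor set, whose cards are metrizable and hence normal. This is exactly why the theorem is phrased with ``uncountable $\kappa$'', and the proof should flag it explicitly.
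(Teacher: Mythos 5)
Your proof is correct and follows essentially the same route as the paper: it combines Theorem \ref{mainresultsrec2} (the spaces $S_\kappa$ and $S_\kappa \setminus \singleton{p}$ are reconstructions of each other) with Theorem \ref{nonnormalskappa} (non-normality of the card for uncountable $\kappa$), and deduces non-paracompactness of the card from the fact that paracompact Hausdorff spaces are normal. The additional remarks on the CH case and on why uncountability is needed are accurate but not required.
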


\begin{proof}
The space $S_\kappa$ is compact Hausdorff, hence paracompact and normal. By Theorem \ref{nonnormalskappa}, the space $S_\kappa \setminus \singleton{p}$ for a $P_\kappa$-point $p$ is non-normal and non-paracompact. However, by Theorem \ref{mainresultsrec2}, both spaces are reconstructions of each other. Therefore, the properties of being normal or paracompact are not reconstructible.
 \end{proof}

\section{Questions}
\label{sectionquestions}

In the previous section we have shown that normality and paracompactness are consistently non-reconstructible. Are these results true in ZFC?

\begin{myquest}
Are normality or paracompactness non-reconstructible properties?
\end{myquest} 

%

Next, we have shown that it is consistent with the negation of CH that $\wstar$ is reconstructible. Is this always the case? Note that our present proof of the fact that $\wstar$ is non-reconstructible under CH uses, on several occasions, the full power of Parovi\v{c}enko's theorem---which itself is equivalent to CH.

\begin{myquest}
Is CH equivalent to the assertion that $\wstar$ is non-reconstructible?
\end{myquest}

Our last question asks whether in Theorem \ref{mainresultsrec}, we can tell which point precisely one has to delete from $\wstar \setminus \singleton{p}$ in order to obtain a given card of $\wstar$. For example, it is easy to see that under CH, if $q$ is a further $P$-point then $\wstar \setminus \Set{p,q} \cong \wstar \setminus \singleton{q}$. Does this behaviour occur for all points of $\wstar$? 
 
\begin{myquest}
\textnormal{[CH].} Let $p$ be a $P$-point of $\wstar$. Is it true that for all $x$ we have $\wstar \setminus \Set{p,x} \cong \wstar \setminus \singleton{x}$?
\end{myquest}

%
%

	  
\end{document}